\newcommand{\R}{\mathbb{R}}
\newcommand{\N}{\mathbb{N}}
\renewcommand{\a}{\alpha}
\renewcommand{\b}{\beta}
\newcommand{\s}{\sigma}
\newcommand{\proj}{\operatorname{Proj}}
\DeclareMathOperator*{\essup}{ess\,sup}
\newtheorem{thm}{Theorem}[section]
\newtheorem{prop}[thm]{Proposition}
\newtheorem{lem}[thm]{Lemma}
\theoremstyle{definition}
\newtheorem{rem}[thm]{Remark}
\numberwithin{equation}{section}
\author[\'O. Ciaurri and L. Roncal]{\'Oscar Ciaurri \and Luz Roncal}
\address{Departamento de Matem\'aticas y Computaci\'on\\
         Universidad de La Rioja\\
         26004 Logro\~no, Spain}
\email{oscar.ciaurri@unirioja.es, luz.roncal@unirioja.es}
\thanks{Research supported by grant MTM2012-36732-C03-02 from Spanish Government}
\keywords{Riesz transforms, harmonic oscillator, Laguerre expansions, vector-valued
inequalities, weighted inequality, mixed-norm spaces, Rubio de Francia extrapolation theorem.}
\subjclass[2010]{Primary: 42C10, 43A90, 47G40, 26A33. Secondary: 42B20, 42B35,
33C45}
\begin{document}

\title[The Riesz transform for the harmonic oscillator]
{The Riesz transform for the harmonic oscillator in spherical
coordinates}

\begin{abstract}
In this paper we show weighted estimates in mixed norm spaces for
the Riesz transform associated with the harmonic oscillator in
spherical coordinates. In order to prove the result we need a
weighted inequality for a vector-valued extension of the Riesz
transform related to the Laguerre expansions which is of
independent interest. The main tools to obtain such extension are a
weighted inequality for the Riesz transform independent of the
order of the involved Laguerre functions and an appropriate
adaptation of Rubio de Francia's extrapolation theorem.
\end{abstract}

\maketitle

\section{Introduction}
Let $H:=-\Delta +|\cdot|^2$ be the harmonic oscillator in $\R^n$.
The eigenfunctions of this operator in $\R^n$ verify
$H\phi=E\phi$, where $E$ is the corresponding eigenvalue. There
are two complete sets of eigenfunctions for $H$. By using cartesian
coordinates, one obtains the functions
\[
\phi_k(x)=\prod_{i=1}^n h_{k_i}(x_i), \qquad k=(k_1,\dots,k_n)\in
\mathbb{N}^n,
\]
where
$h_{k_i}(x_i)=(\sqrt{\pi}2^{k_i}k_i!)^{-1/2}H_{k_i}(x_i)e^{-x_i^2/2}$,
and $H_j$ denotes the Hermite polynomial of degree $j\in
\mathbb{N}$ (see \cite[p. 60]{Lebedev}). The system of functions $\{\phi_k\}_{k\in
\mathbb{N}^n }$ is orthonormal and complete in $L^2(\R^n,dx)$.

But the situation is completely different if we analyze the
eigenfunctions of the harmonic oscillator by using spherical
coordinates (see \eqref{eq:har-sphe} below). Let $\mathbb{B}^n$ be the unit
ball in $\mathbb{R}^n$ and $\mathbb{S}^{n-1}=\partial
\mathbb{B}^n$. Let $\mathcal{H}_j$ be the space of spherical
harmonics of degree $j$ in $n$ variables, and let
$\{\mathcal{Y}_{j,\ell}\}_{\ell=1,\dots,\dim{\mathcal{H}_j}}$ be
an orthonormal basis for $\mathcal{H}_j$ in
$L^2(\mathbb{S}^{n-1},d\s)$, where $\s$ is the surface area
measure in $\mathbb{S}^{n-1}$. Then the eigenfunctions of the
harmonic oscillator, see \cite{CouDeBSom}, are given by
\begin{equation}\label{eq:our system}
\tilde{\phi}_{m,k,\ell}(x)=\left(\frac{2\Gamma(k+1)}{\Gamma(m-k+n/2)}\right)^{1/2}
L_{k}^{n/2-1+m-2k}(r^2)
r^{m-2k}\mathcal{Y}_{m-2k,\ell}(x')e^{-r^2/2},
\end{equation}
where $x\in \mathbb{B}^d$, $r=|x|$, $x'\in \mathbb{S}^{n-1}$, $m\ge 0$, $k=0,\dots, [m/2]$,
$\ell=1,\dots,\dim{\mathcal{H}_{m-2k}}$, and $L_k^{b}$ are
Laguerre polynomials of order $b$ and degree $k\in \mathbb{N}$, see \cite[p. 76]{Lebedev}.
This system is orthonormal and complete in $L^2(\R^n,dx)$ and the
eigenvalues are $E_{m,k,\ell}=(n+2m)$. Moreover
\[
L^2(\R^n,dx)=\bigoplus_{m=0}^\infty \mathcal{J}_m
\]
with
\[
\mathcal{J}_m=\{f\in C^{\infty}(\R^n): Hf=(n+2m) f \}.
\]

One of the main targets of this paper will be the analysis of the Riesz
transform related to the system of eigenfunctions of the harmonic
oscillator in spherical coordinates.

It could be said that the investigation of conjugacy operators
related to discrete and continuous non-trigonometric orthogonal
expansions was initiated in the seminal article by B. Muckenhoupt
and E. M. Stein \cite{Muc-St}. They analyze a substitute of
classical conjugacy function in the context of ultraspherical
polynomials expansions, Hankel transforms and Fourier-Bessel
expansions. Later, the book by Stein \cite{St-rojo} propelled the
research in Fourier Analysis of general laplacians. It is
noteworthy to observe that in the classical one-dimensional case
the ``continuous'' counterpart of the conjugacy is the Hilbert
transform, and we have the equivalence between Hilbert transform
and the so called Riesz transform. So, abusing of the language,
the wording conjugacy and Riesz transform are used as the same
thing. For the last forty years, the research developed is huge,
and the list of references could be endless. Concerning examples
close to our context, the study of Riesz transforms in the setting
of multi-dimensional Hermite functions was initiated by S.
Thangavelu \cite{Th1, Th2, Th3} and continued in
\cite{Torrea-Stempak,Harboure-DeRosa-Segovia-Torrea,Lust}. On the
other hand, Riesz transforms associated with expansions based on
different multi-dimensional Laguerre functions have been
investigated by A. Nowak and K. Stempak in \cite{Now-Ste-JFA,
Now-Ste-Ad}.

We will analyze the Riesz transform associated with the system
given in \eqref{eq:our system} in the so called mixed norm spaces
$L^{p,2}(\R^n,r^{n-1}\,dr\,d\sigma)$ (see Section
\ref{sec:results} for definition). These spaces were first systematically studied by A. Benedek and R. Panzone in \cite{Benedek-Panzone}. They arise frequently
in harmonic analysis when the spherical harmonics are involved.
The papers \cite{Rubio,Cor,CarRomSor,BalCor} contain good examples
of their use. In \cite{CR}, the authors
considered this kind of spaces to study fractional integrals
related to the functions $\tilde{\phi}_{m,k,\ell}$.

The boundedness properties of the Riesz transform related to
$\tilde{\phi}_{m,k,\ell}$ in the mixed norm spaces will be reduced
to two inequalities due to the decomposition of the harmonic
oscillator in spherical coordinates. The first one will be a
vector-valued inequality for a sequence of Riesz transforms for
Laguerre expansions of convolution type and order $(n-2)/2+k$ (note that $\tilde{\phi}_{m,k,\ell}(x)=\ell_{k}^{n/2-1+m-2k}(r)r^{m-2k}\mathcal{Y}_{m-2k,\ell}(x')$, where $\ell_k^\alpha$ denotes the Laguerre functions which are defined in \eqref{ec:laguerre} below).
This is the main point in the proof of our result and it requires
very precise estimates of the kernel of the Riesz transform in the
Laguerre setting in terms of the order. With these estimates we
will be able to apply the Calder\'{o}n-Zygmund theory. Then a  suitable
 version of the extrapolation theorem of Rubio de Francia will do the rest
to deduce the vector-valued extension. The second inequality
appearing from the angular part
of the harmonic oscillator, will be deduced from the
Calder\'{o}n-Zygmund theory as well.

\section{The Riesz transform for $H$ in spherical coordinates}
\label{sec:results}
The harmonic oscillator in spherical coordinates can be written as
\begin{equation}
\label{eq:har-sphe} H=-\frac{\partial^2}{\partial
r^2}-\frac{n-1}{r}\frac{\partial}{\partial
r}+r^2-\frac{1}{r^2}\Delta_0,
\end{equation}
where $\Delta_0$ is the spherical part of the Laplacian. It can be
checked that
\[
H=\delta^{*}\delta +n,
\]
with
\begin{equation}
\label{eq:delta-osci}
\delta=x' \left(\frac{\partial}{\partial
r}+r\right)+\frac{1}{r}\nabla_0, \qquad \delta^{*}=-x' \left(\frac{\partial}{\partial
r}-r\right)-\frac{1}{r}\nabla_0,
\end{equation}
where $\nabla_0$ is the spherical gradient, which is the spherical part of $\nabla$ and it involves
only derivatives in $x'$. Moreover $\Delta_0=\nabla_0\cdot\nabla_0$.

For each $\s>0$, we define the fractional integrals for the harmonic
oscillator as
\[
H^{-\s}f=\sum_{m=0}^\infty
\frac{1}{(n+2m)^{\s}}\proj_{\mathcal{J}_m}f,
\]
where
\[
\proj_{\mathcal{J}_m}f=\sum_{k=0}^{\left[\frac{m}{2}\right]}
\sum_{\ell=1}^{\dim{\mathcal{H}_{m-2k}}}c_{m,k,\ell}(f)
\tilde{\phi}_{m,k,\ell}, \qquad
c_{m,k,\ell}(f)=\int_{\R^n}f(y)\overline{\tilde{\phi}_{m,k,\ell}}(y)\,
dy.
\]
With the previous definitions the Riesz transform is given as
\[
Rf:=| \delta H^{-1/2}f|.
\]
Observe that $\eqref{eq:delta-osci}$ is $\nabla+x$ in spherical coordinates, being $\nabla$ the usual gradient. Hence $Rf$ coincides with operator $|(\nabla+x)H^{-1/2}|$.

In order to analyze this kind of operators we introduce the mixed norm
spaces, defined as
\[
L^{p,2}(\R^n,r^{n-1}\,dr \, d\sigma)=\{f(x):
\|f\|_{L^{p,2}(\R^n,r^{n-1}\,dr \, d\sigma)}<\infty\},
\]
where
\[
\|f\|_{L^{p,2}(\R^n,r^{n-1}\,dr \, d\sigma)}=\Big(\int_0^\infty
\Big(\int_{\mathbb{S}^{n-1}}|f(rx')|^2\, d\sigma(x')\Big)^{p/2}\,
r^{n-1}\, dr\Big)^{1/p},
\]
with the obvious modification in the case $p=\infty$. The main
feature of these spaces is that we consider the $L^2$-norm
in the angular part and the $L^p$-norm in the radial one.
They are
very different from $L^p(\R^n,dx)$; in fact $L^p(\R^n,dx)\subset
L^{p,2}(\R^n,r^{n-1}\,dr \, d\sigma)$ for $p>2$, $L^2(\R^n,dx)=
L^{2,2}(\R^n,r^{n-1}\,dr \, d\sigma)$, and
$L^{p,2}(\R^n,r^{n-1}\,dr \, d\sigma)\subset L^{p}(\R^n,dx)$ for
$p<2$. These spaces are the most suitable when spherical harmonics
are involved due to the orthogonality of the system in the sphere $\mathbb{S}^{n-1}$. Indeed, if a function $f$ on $\R^n$ is expanded in
spherical harmonics,
\begin{equation*}
f(x)=\sum_{j=0}^\infty
\sum_{\ell=1}^{\dim{\mathcal{H}_j}}f_{j,\ell}(r)\mathcal{Y}_{j,\ell}(x'),
\end{equation*}
where
\begin{equation}
\label{ec:descom}
f_{j,\ell}(r)=\int_{\mathbb{S}^{d-1}}f(rx')\overline{\mathcal{Y}_{j,\ell}}(x')\,
d\sigma(x'),
\end{equation}
we have
\[
\|f\|_{L^{p,2}(\R^n,r^{n-1}\,dr \, d\sigma)}=
\Big\|\Big(\sum_{j=0}^\infty
\sum_{\ell=1}^{\dim{\mathcal{H}_j}}|f_{j,\ell}(r)|^2\Big)^{1/2}
\Big\|_{L^p(\R_+,r^{n-1}\, dr)}.
\]

To establish our result related to weighted inequalities
for the Riesz transform, we have to define the class of weights
involved in them. For $1\le p <\infty$, we denote by
$A_p^{\a}=A_p^{\a}(\R_+,d\mu_{\a})$ the Muckenhoupt class of $A_p$
weights on the space $(\R_+, d\mu_{\a},|\cdot|)$, where $$
d\mu_{\a}(x)=x^{2\a+1}dx.
$$
More precisely,
$A_p^{\a}$ is the class of all nonnegative functions $w\in
L_{\mathrm{loc}}^1(\R_+,d\mu_{\a})$ such that $w^{-p'/p}\in
L_{\mathrm{loc}}^1(\R_+,d\mu_{\a})$, where $1/p+1/p'=1$, and
\begin{equation*}
\sup_{I\in
\mathcal{I}}\Big(\frac{1}{\mu_{\a}(I)}\int_Iw\,d\mu_{\a}\Big)
\Big(\frac{1}{\mu_{\a}(I)}\int_Iw^{-p'/p}\,d\mu_{\a}\Big)^{p/p'}<\infty
\end{equation*}
when $1<p<\infty$, or
$$\displaystyle{\sup_{I\in
\mathcal{I}}\frac{1}{\mu_{\a}(I)}\int_Iw\,d\mu_{\a}
\essup_{x\in I}w^{-1}<\infty}$$
if $p=1$; here $\mathcal{I}$ is the class of all intervals in
$(\R_+,|\cdot|)$.

One of the main results of the paper is stated below.

\begin{thm}
\label{th:osci} Let $n\ge 2$, $1<p<\infty$, and $w\in
A_{p}^{n/2-1}$. Then
\[
\|Rf\|_{L^{p,2}(\R^n,w(r)r^{n-1}\,dr \, d\sigma)}\le
C\|f\|_{L^{p,2}(\R^n,w(r)r^{n-1}\,dr \, d\sigma)},
\]
 for each $f\in
L^{p,2}(\R^n,w(r)r^{n-1}\,dr \, d\sigma)$ and
with a constant $C$ depending on $n$ and $w$ only.
\end{thm}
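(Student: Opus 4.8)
The starting point is the decomposition of the Riesz transform $Rf = |\delta H^{-1/2}f|$ via the spherical harmonic expansion of $f$. Writing $f(x) = \sum_{j,\ell} f_{j,\ell}(r)\mathcal{Y}_{j,\ell}(x')$, each subspace $\mathcal{J}_m$ is spanned by the $\tilde\phi_{m,k,\ell}$, and the crucial observation recorded in the introduction is that $\tilde\phi_{m,k,\ell}(x) = \ell_k^{n/2-1+m-2k}(r)\,r^{m-2k}\,\mathcal{Y}_{m-2k,\ell}(x')$, so that, fixing the spherical degree $j = m-2k$, the radial parts of the eigenfunctions in $\mathcal{J}_m$ are exactly the Laguerre functions $\ell_k^{\alpha}$ with $\alpha = n/2 - 1 + j$. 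Applying $\delta$ (which is $\nabla + x$ in spherical coordinates) and $H^{-1/2}$ and using \eqref{eq:delta-osci}, the radial component of $\delta H^{-1/2}f$ along $\mathcal{Y}_{j,\ell}$ becomes a Laguerre-type Riesz transform $\mathcal{R}^{\alpha_j}$ acting on $f_{j,\ell}$, with $\alpha_j = n/2 - 1 + j$, while the angular component produces a term of the form $\frac{1}{r}\nabla_0$ applied after an inverse-square-root operator, which is controlled by a bounded operator on $L^2(\mathbb{S}^{n-1})$ composed with a (scalar) operator in the radial variable. Thus $\|Rf\|_{L^{p,2}(w)}$ is dominated by the sum of two contributions.

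**Main estimate.** The first and principal contribution, after taking the $L^2$-norm in the sphere and using the orthonormality of $\{\mathcal{Y}_{j,\ell}\}$, reduces to a \emph{vector-valued} estimate
\[
\Big\|\Big(\sum_{j\ge 0}\sum_{\ell} |\mathcal{R}^{\alpha_j}f_{j,\ell}(r)|^2\Big)^{1/2}\Big\|_{L^p(w\,r^{n-1}dr)}
\le C\Big\|\Big(\sum_{j\ge 0}\sum_{\ell}|f_{j,\ell}(r)|^2\Big)^{1/2}\Big\|_{L^p(w\,r^{n-1}dr)},
\]
which (after matching $r^{n-1}dr$ with the Laguerre measure $d\mu_\alpha$ up to the harmless factor $r$, and absorbing $r^{j}$ factors into the Laguerre functions) is precisely a weighted vector-valued inequality for the family $\{\mathcal{R}^{\alpha_j}\}_j$ of Laguerre Riesz transforms of convolution type whose orders $\alpha_j$ run over the arithmetic progression $n/2-1, n/2, n/2+1,\dots$. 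This is the inequality the introduction flags as the heart of the paper: it is obtained by first establishing a weighted scalar bound for $\mathcal{R}^\alpha$ with a Calderón–Zygmund constant \emph{uniform in the order} $\alpha$ — which requires the sharp kernel estimates (size and smoothness) with explicit dependence on $\alpha$ — and then running the appropriate (Laguerre-adapted) version of Rubio de Francia's extrapolation theorem to pass from the scalar weighted bound to the $\ell^2$-valued weighted bound. The weight class $A_p^{n/2-1}$ in the statement is exactly the Muckenhoupt class for the base measure $d\mu_{n/2-1}(r) = r^{n-1}dr$, which is why it appears.

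**The second term and assembling.** The second contribution, coming from the $\frac1r\nabla_0$ part of $\delta$, does not mix Laguerre orders: on each $\mathcal{J}_m$ it acts as a bounded operator on $L^2(\mathbb{S}^{n-1})$ (spherical gradient composed with the spectral multiplier $\tfrac{1}{(n+2m)^{1/2}}$ and a factor related to $r^{-1}$ times the projection) times a scalar radial operator that is, again, a Laguerre multiplier/Riesz-type operator to which the Calderón–Zygmund machinery applies directly; taking the $L^2$ norm in the sphere this is handled by the same type of weighted estimate — in fact the $L^2$ angular orthogonality collapses it to a single scalar weighted inequality with weight in $A_p^{n/2-1}$, with no extrapolation needed. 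Combining the two bounds and using the characterization $\|f\|_{L^{p,2}(w\,r^{n-1}dr\,d\sigma)} = \|(\sum_{j,\ell}|f_{j,\ell}|^2)^{1/2}\|_{L^p(w\,r^{n-1}dr)}$ gives the theorem.

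**Main obstacle.** The hard part is the uniform-in-order Calderón–Zygmund estimate for the Laguerre Riesz kernel $\mathcal{R}^\alpha(r,s)$: one needs size and gradient bounds of standard Calderón–Zygmund type, $|\mathcal{R}^\alpha(r,s)| \lesssim \mu_\alpha(I(r,s))^{-1}$ and the corresponding Lipschitz bound, with constants that do \emph{not} blow up as $\alpha\to\infty$ along the progression $\alpha_j = n/2-1+j$ — otherwise the extrapolation step produces a divergent sum over $j$. This forces a careful analysis of the integral representation of $\mathcal{R}^\alpha$ (via the heat or Poisson semigroup for the Laguerre operator), splitting into local and global regions and tracking every $\alpha$-dependent factor, in particular taming the Gamma-function ratios coming from the normalization of $\ell_k^\alpha$. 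Everything else — the spherical-harmonic bookkeeping, the reduction to the radial operators, and the final application of extrapolation — is then comparatively routine.
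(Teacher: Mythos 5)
Your strategy for the main (radial--derivative) term is essentially the paper's: expand $f$ in spherical harmonics, identify the radial parts of the eigenfunctions in $\mathcal{J}_m$ with the Laguerre functions $\ell_k^{n/2-1+j}$ ($j=m-2k$), reduce to a weighted $\ell^2$-valued inequality for the conjugated Laguerre Riesz transforms $u_j\mathcal{R}^{n/2-1+j}(u_j^{-1}\cdot)$ with $u_j(r)=r^{j}$, prove weighted Calder\'on--Zygmund kernel estimates \emph{uniform in $j$}, and upgrade to the vector-valued statement by a Rubio de Francia extrapolation theorem adapted to $(\R_+,d\mu_{n/2-1})$. That part is correct and matches Sections 3 and 5 of the paper.

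There is, however, a genuine gap in your treatment of the remaining contribution. First, when $\delta$ acts on $r^{j}\ell_k^{n/2-1+j}(r)\mathcal{Y}_{j,\ell}(x')$, the operator $\partial_r+r$ also differentiates the factor $r^{j}$ and produces an extra term $\tfrac{j}{r}\,r^{j}\ell_k^{n/2-1+j}(r)$ along $x'\mathcal{Y}_{j,\ell}$, which your reduction omits; together with the $\tfrac1r\nabla_0$ piece (whose angular $L^2$ norm contributes $\sqrt{j(2j+n-2)}\sim j$) it yields, after integrating over the sphere, the square function $\bigl(\sum_{j\ge1}\sum_{\ell}\tfrac{2j+n-2}{j}\,|j\,r^{j}\mathcal{T}^{n/2-1+j}((\cdot)^{-j}f_{j,\ell})(r)|^2\bigr)^{1/2}$ with $\mathcal{T}^{\a}=\tfrac1x (L_\a)^{-1/2}$. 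Second, and more importantly, your claim that angular orthogonality ``collapses'' this to a single scalar weighted inequality with ``no extrapolation needed'' is false for $p\neq 2$: orthogonality only converts the $L^2(\S^{n-1})$ norm into an $\ell^2$ sum over $(j,\ell)$, and one must still bound the $L^p(w\,r^{n-1}dr)$ norm of an $\ell^2$-valued square function whose components are \emph{different} operators $j\,u_j\mathcal{T}^{n/2-1+j}(u_j^{-1}\cdot)$ indexed by $j$. This is a genuine vector-valued inequality of exactly the same nature as the one for the $\mathcal{R}$-family, and the paper must prove it separately (its Theorem 4.1): uniform-in-$j$ weighted Calder\'on--Zygmund estimates for the kernels $j(xy)^{j}T^{n/2-1+j}(x,y)$ --- where the growing factor $j$ is tamed by the identity $\tfrac{\b}{x}\ell_k^{\b}=\tfrac{\sqrt{k+1}}{x}\ell_{k+1}^{\b-1}+x\sqrt{k+\b+1}\,\ell_k^{\b+1}$ --- followed by the same extrapolation step. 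Without this second vector-valued theorem your argument does not close.
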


The proof of Theorem \ref{th:osci} will be given in Section
\ref{sec:proof}. The main estimates will be developed in Section
\ref{sec:Riesz-Laguerre} and Section \ref{sec:angular}.

\section{Vector-valued inequalities for the Riesz transform for Laguerre expansions of convolution type}
\label{sec:Riesz-Laguerre}
Let $\a>-1$, consider the differential operator given by
\begin{equation}\label{eq:differential operator}
L_{\a}=-\frac{d^2}{dx^2}+x^2-\frac{2\a+1}{x}\frac{d}{dx},
\end{equation}
which is symmetric on $\R_+$ equipped with the measure $d\mu_\a$.
The Laguerre functions $\ell_k^{\a}$ are defined by
\begin{equation}
\label{ec:laguerre}
\ell_{k}^{\a}(x)=\Big(\frac{2\Gamma(k+1)}{\Gamma(k+\a+1)}\Big)^{1/2}
L_{k}^{\a}(x^2)e^{-x^2/2}, \quad x>0, \quad \alpha>-1.
\end{equation}
The
functions $\ell_k^{\a}$ are eigenfunctions of the differential
operator \eqref{eq:differential operator}. Indeed, we have
$L_{\a}\ell_k^{\a}=(4k+2\a+2)\ell_k^{\a}$. Furthermore, the system
$\{\ell_k^{\a}\}_{k\in\N}$ is an orthonormal basis of $L^2(\R_+,
d\mu_{\a})$. We will refer to the functions $\ell_k^\a$ as
\textit{Laguerre functions of convolution type}.

It is easily seen that $L_{\a}$ can be decomposed as
\begin{equation*}
L_{\a}=\delta^*_\a\delta_\a+2(\a+1),
\end{equation*}
where
$$
\delta_\a=\frac{d}{dx}+x,
$$
and
$$
\delta^*_\a=-\frac{d}{dx}+x-\frac{2\a+1}{x}.
$$

We provide now the definition of the Riesz transforms.
Since the spectrum of $L_{\a}$ is separated from zero, we can define the fractional integrals of order $\s$, for each $\s>0$, as
\begin{equation}
\label{eq:frac-Laguerre}
(L_{\a})^{-\s}f=\sum_{k=0}^{\infty}\frac{1}{(4k+2\a+2)^{\s}}\mathcal{P}_k^{\a}f,
\end{equation}
with $\mathcal{P}_k^{\a}f=\langle
f,\ell_k^{\a}\rangle_{d\mu_{\a}}\ell_k^{\a}$, where $\langle
f,g\rangle_{d\mu_{\a}}$ means
$\int_{\R_+}f(x)\overline{g(x)}\,d\mu_{\a}(x)$.
Now, by using $\frac{d}{dx}L_k^{\a}=-L_{k-1}^{\a+1}$, $\a>-1$, $k\in \N$, see \cite[(4.18.6)]{Lebedev}, we obtain
\begin{equation*}
\delta_{\a} \ell_k^{\a}=-2\sqrt{k}x\ell_{k-1}^{\a+1}.
\end{equation*}
Therefore, for $f\in L^2(\R_+,d\mu_{\a})$ with the expansion
$f=\sum_{k}\langle f,\ell_{k}^{\a}\rangle_{d\mu_{\a}}\ell_k^{\a}$,
we define the Riesz transform for the expansions of Laguerre functions of convolution type as
\begin{equation}\label{eq:defRiesz espectral}
\mathcal{R}^{\a}f=\delta_\a (L_\a)^{-1/2}f=-2\sum_{k=0}^{\infty}\Big(\frac{k}{4k+2\a+2}\Big)^{1/2}
\langle f,\ell_{k}^{\a}\rangle_{d\mu_{\a}}x\,\ell_{k-1}^{\a+1}.
\end{equation}
The system $\{x\,\ell_{k-1}^{\a+1}\}_{k\in \N}$ is an orthonormal
basis in $L^2(\R_+, d\mu_{\a})$, see \cite[Proposition
4.1]{Now-Ste-Ad}. Therefore, the series above converges in
$L^2(\R_+, d\mu_{\a})$ and defines a bounded operator therein.

Our result about the Riesz transform for the Laguerre expansions is the following.
\begin{thm}\label{th:LpLq Lag convolution dim1}
Let $\a\ge -1/2$, $a\ge 1$, and $1< p,r< \infty$.
Define $u_j(x)=x^{aj}$, $x\in\R_+$, $j=0,1,\ldots$.
Then there exists a constant $C$ such that
\begin{equation*}
\Big\|\Big(\sum_{j=0}^\infty|u_j\mathcal{R}^{\a+aj}(u_j^{-1}f_j)|^r\Big)^{1/r}
\Big\|_{L^p(\R_+,w\,
d\mu_{\a})} \le
C\Big\|\Big(\sum_{j=0}^\infty|f_j|^r\Big)^{1/r}\Big\|_{L^p(\R_+,
w\,d\mu_{\a})},
\end{equation*}
for all $w\in A_{p}^\a$. Moreover the constant $C$ depends on $\a$ and $w$ only.
\end{thm}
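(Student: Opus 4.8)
The plan is to deduce the vector-valued inequality of Theorem \ref{th:LpLq Lag convolution dim1} from a single \emph{scalar} weighted inequality for $\mathcal{R}^{\a}$ that is uniform in the order $\a$, combined with a version of the Rubio de Francia extrapolation theorem adapted to the family of operators $T_j := u_j\mathcal{R}^{\a+aj}u_j^{-1}$. The first and main step is therefore to prove: there is a constant $C=C(\a,p,[w]_{A_p^\a})$, but \emph{independent of $j$}, such that $\|u_j\mathcal{R}^{\a+aj}(u_j^{-1}g)\|_{L^p(\R_+,w\,d\mu_\a)}\le C\|g\|_{L^p(\R_+,w\,d\mu_\a)}$ for all $w\in A_p^\a$. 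To do this I would pass to the integral kernel $K^{\a+aj}$ of $u_j\mathcal{R}^{\a+aj}u_j^{-1}$ with respect to $d\mu_\a$ and show it is a Calder\'on--Zygmund kernel on the space of homogeneous type $(\R_+,d\mu_\a,|\cdot|)$, with CZ constants bounded uniformly in $j$. This requires the ``very precise estimates of the kernel of the Riesz transform in the Laguerre setting in terms of the order'' advertised in the introduction: size estimate $|K^{\b}(x,y)|\lesssim \mu_\a(B(x,|x-y|))^{-1}$ and the gradient/H\"older estimate $|\partial_x K^{\b}(x,y)|+|\partial_y K^{\b}(x,y)|\lesssim |x-y|^{-1}\mu_\a(B(x,|x-y|))^{-1}$, with implicit constants that do not blow up as $\b=\a+aj\to\infty$. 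The weight $u_j(x)=x^{aj}$ and its inverse conjugate the order-$\a$ measure to the order-$(\a+aj)$ normalization, which is exactly the algebraic reason the shift from $\a$ to $\a+aj$ is natural here; one must check that conjugating by $u_j$ transforms the known order-$\beta$ kernel estimates into $j$-uniform estimates against $d\mu_\a$.

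Granting the scalar bound, the second step is extrapolation. The family $\{(|u_j\mathcal{R}^{\a+aj}(u_j^{-1}f_j)|,|f_j|)\}_j$ satisfies, uniformly in $j$, the weighted $L^p(w\,d\mu_\a)$ inequality for every $p\in(1,\infty)$ and every $w\in A_p^\a$, with constant depending only on $p$ and $[w]_{A_p^\a}$. The Rubio de Francia extrapolation theorem, in its ``$\ell^r$-valued'' form (see e.g.\ the Cruz-Uribe--Martell--P\'erez machinery, which works verbatim on spaces of homogeneous type, here $(\R_+,d\mu_\a)$), then upgrades this to the vector-valued estimate $\|(\sum_j|u_j\mathcal{R}^{\a+aj}(u_j^{-1}f_j)|^r)^{1/r}\|_{L^p(w\,d\mu_\a)}\le C\|(\sum_j|f_j|^r)^{1/r}\|_{L^p(w\,d\mu_\a)}$ for all $1<p,r<\infty$ and all $w\in A_p^\a$. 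The point is that the constant produced by extrapolation depends only on the $A_p^\a$ characteristic of $w$ and on the \emph{supremum over $j$} of the scalar constants, which by Step 1 is finite.

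The key technical obstacle is entirely in Step 1: extracting kernel bounds for $\mathcal{R}^\b$ that are explicit and, crucially, \emph{uniform as $\b\to\infty$}. Off-diagonal, one would use the heat-semigroup/subordination representation $(L_\b)^{-1/2}=c\int_0^\infty t^{-1/2}e^{-tL_\b}\,dt$ together with the Mehler-type kernel for the Laguerre heat semigroup, apply $\delta_\b$, and bound the resulting oscillatory/Bessel-type integrals; the $\b$-dependence enters through the order of the Bessel functions appearing in the Laguerre heat kernel, and one needs the classical uniform asymptotics for $I_\nu$ (and its derivatives) to keep the constants from degenerating. The conjugation by $u_j=x^{aj}$ must be handled with care near $x=0$, where $d\mu_\a$ is ``thin'', since that is where the order shift is felt most strongly; the restriction $a\ge 1$ and $\a\ge -1/2$ should be precisely what makes the relevant ratios of Gamma factors and the local integrability conditions behave. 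Once these $j$-uniform CZ estimates are in hand, the Calder\'on--Zygmund theorem on spaces of homogeneous type gives the weighted $L^p$ bound with the required dependence on $[w]_{A_p^\a}$ only, and the rest is the extrapolation step, which is now routine.
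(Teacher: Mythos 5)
Your proposal follows essentially the same route as the paper: a $j$-uniform scalar weighted bound for $u_j\mathcal{R}^{\a+aj}u_j^{-1}$ obtained from Calder\'on--Zygmund kernel estimates on $(\R_+,d\mu_\a,|\cdot|)$ (the paper's Propositions \ref{th:weighted inequality} and \ref{prop:CZestimates}), followed by a Rubio de Francia extrapolation argument adapted to $d\mu_\a$ to produce the vector-valued inequality (Proposition \ref{th:GC-RFmeasure} via Theorem \ref{th:extra-RF}). The only cosmetic difference is that the paper controls the order dependence of the Bessel function via Schl\"afli's Poisson-type integral representation and Gamma-ratio estimates rather than uniform asymptotics for $I_\nu$, but this is a variation within the same strategy.
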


In order to prove Theorem \ref{th:LpLq Lag convolution dim1}, we
need two ingredients.

\begin{prop}\label{th:weighted inequality}
Let $\a\ge-1/2$, $a\ge 1$, and $1< r< \infty$. Define $u_j(x)=x^{aj}$,
$x\in\R_+$, $j=0,1,\ldots$. Then,
\begin{equation*}
\int_0^\infty|u_j\mathcal{R}^{\a+aj}(u_j^{-1}f)(x)|^rw(x)\,d\mu_{\a}(x)\le
C\int_0^{\infty}|f(x)|^rw(x)d\mu_{\a}(x),
\end{equation*}
for every weight $w\in A_r^\a$ and
with $C$ independent of $j$ and depending on $\a$ and $w$.
\end{prop}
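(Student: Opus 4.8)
The plan is to view, for each fixed $j$, the operator
\[
T_jf:=u_j\,\mathcal{R}^{\a+aj}\bigl(u_j^{-1}f\bigr)
\]
as a Calder\'on--Zygmund operator on the space of homogeneous type $(\R_+,|\cdot|,d\mu_{\a})$ --- this is of homogeneous type precisely because $\a\ge-1/2$, so that $d\mu_{\a}$ is doubling --- and to show that the whole family $\{T_j\}_{j\ge0}$ has $L^2$ bounds and Calder\'on--Zygmund kernel constants that do not depend on $j$. Once this is done, the classical weighted Calder\'on--Zygmund theory on spaces of homogeneous type yields the $A_r^{\a}$-weighted $L^r$ bound for $T_j$, with a constant depending only on $\a$, $r$ and the $A_r^{\a}$ characteristic of $w$, which is exactly the content of Proposition \ref{th:weighted inequality}. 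Writing $K^{\b}(x,y)$ for the kernel of $\mathcal{R}^{\b}$ with respect to $d\mu_{\b}$ and using $d\mu_{\a+aj}(y)=y^{2aj}\,d\mu_{\a}(y)$, one checks at once that $T_j$ has kernel $\widetilde K_j(x,y)=(xy)^{aj}K^{\a+aj}(x,y)$ with respect to $d\mu_{\a}$.

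The $L^2$ bound is immediate. From the spectral formula \eqref{eq:defRiesz espectral} one has $\|\mathcal{R}^{\b}\|_{L^2(d\mu_{\b})\to L^2(d\mu_{\b})}\le1$ for every $\b>-1$, since $k/(4k+2\b+2)\le1/4$; and $f\mapsto u_j^{-1}f$ and $f\mapsto u_jf$ are mutually inverse isometries between $L^2(d\mu_{\a})$ and $L^2(d\mu_{\a+aj})$, so that $\|T_j\|_{L^2(d\mu_{\a})\to L^2(d\mu_{\a})}\le1$ for all $j$.

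The core of the argument is the kernel analysis. I would start from the subordination formula $(L_{\b})^{-1/2}=\pi^{-1/2}\int_0^{\infty}t^{-1/2}e^{-tL_{\b}}\,dt$ together with the Hille--Hardy expression for the heat kernel $G_t^{\b}$ of $e^{-tL_{\b}}$,
\[
G_t^{\b}(x,y)=\frac{2e^{-2t}(xy)^{-\b}}{1-e^{-4t}}\exp\!\Bigl(-\frac{1+e^{-4t}}{2(1-e^{-4t})}(x^{2}+y^{2})\Bigr)\,I_{\b}\!\Bigl(\frac{2e^{-2t}xy}{1-e^{-4t}}\Bigr),
\]
where $I_{\b}$ is the modified Bessel function of the first kind. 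Applying $\delta_{\b,x}=\partial_x+x$ and simplifying, the terms $\tfrac{\b}{x}I_{\b}$ produced when $\partial_x$ hits the prefactor $(xy)^{-\b}$ cancel exactly against those coming from $I_{\b}'=I_{\b+1}+\tfrac{\b}{z}I_{\b}$; multiplying the resulting integral by $(xy)^{aj}$ and setting $\b=\a+aj$, the factor $(xy)^{aj}$ then cancels the prefactor $(xy)^{-(\a+aj)}$, and one is left with
\[
\widetilde K_j(x,y)=\frac{1}{\sqrt{\pi}}\int_0^{\infty}\!t^{-1/2}\,\frac{2e^{-2t}(xy)^{-\a}}{1-e^{-4t}}\,e^{-\frac{1+e^{-4t}}{2(1-e^{-4t})}(x^{2}+y^{2})}\Bigl[-\tfrac{2xe^{-4t}}{1-e^{-4t}}\,I_{\a+aj}(z)+\tfrac{2ye^{-2t}}{1-e^{-4t}}\,I_{\a+aj+1}(z)\Bigr]dt,
\]
with $z=\tfrac{2e^{-2t}xy}{1-e^{-4t}}$. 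This is precisely the integral formula for $K^{\a}(x,y)$, except that the Bessel orders $\a$ and $\a+1$ appearing in it have been replaced by $\a+aj$ and $\a+aj+1$: the homogeneity weight $u_j$ has confined the entire $j$-dependence to a shift of these two indices. Here the hypothesis $a\ge1$, together with $\a\ge-1/2$, is used to ensure that the shifted orders stay $\ge1/2$ for every $j\ge1$, hence safely inside the range where $\nu\mapsto I_{\nu}(z)$ is decreasing.

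It remains to extract from this formula the size and smoothness estimates, uniformly in $j$. For the size estimate one uses the monotonicity $I_{\a+aj}(z)\le I_{\a+a}(z)$ and $I_{\a+aj+1}(z)\le I_{\a+a+1}(z)$ for $j\ge1$, which reduces the bound to the one for the kernel of the fixed operator $\mathcal{R}^{\a+a}$ (equivalently $\mathcal{R}^{\a}$), thereby giving $|\widetilde K_j(x,y)|\le C\,\mu_{\a}(B(x,|x-y|))^{-1}$. For the derivatives one differentiates the displayed integral and tames the problematic factors $\tfrac{\a+aj}{x}$, $\tfrac{\a+aj}{y}$ arising when $\partial_x,\partial_y$ fall on the Bessel factor by combining the recurrence $\nu I_{\nu}(z)=\tfrac{z}{2}\bigl(I_{\nu-1}(z)-I_{\nu+1}(z)\bigr)$, the monotonicity of $I_{\nu}$, and the cancellation against the $\tfrac{aj}{x}$, $\tfrac{aj}{y}$ terms coming from differentiating $u_j$; this yields
\[
|\partial_x\widetilde K_j(x,y)|+|\partial_y\widetilde K_j(x,y)|\le \frac{C}{|x-y|}\cdot\frac{1}{\mu_{\a}(B(x,|x-y|))},
\]
with $C$ independent of $j$, $x$ and $y$. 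Carrying this out in detail --- splitting the $t$-integral into its near-diagonal and far-diagonal pieces and inserting the sharp behaviour of $I_{\nu}$, order-uniformly ($\sim(z/2)^{\nu}/\Gamma(\nu+1)$ for $z\lesssim\nu$, $\sim e^{z}/\sqrt{2\pi z}$ for $z\gtrsim\nu$, with the transition at $z\sim\nu$) --- is the technically delicate step, and is where I expect the main obstacle to lie. Granting the uniform $L^2$ bound and these uniform kernel estimates, each $T_j$ is a Calder\'on--Zygmund operator on $(\R_+,|\cdot|,d\mu_{\a})$ with constants independent of $j$, and the proposition follows from the weighted Calder\'on--Zygmund theorem as announced in the first paragraph.
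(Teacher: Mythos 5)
Your framework is the same as the paper's: view $T_jf=u_j\mathcal{R}^{\a+aj}(u_j^{-1}f)$ as a Calder\'on--Zygmund operator on the homogeneous space $(\R_+,|\cdot|,d\mu_{\a})$ with kernel $(xy)^{aj}R^{\a+aj}(x,y)$, prove size and gradient bounds uniform in $j$, and invoke weighted Calder\'on--Zygmund theory (the paper uses Calder\'on's theorem for the maximal function $M_\a$ to run the weighted theory). Your $L^2$ reduction via the isometry between $L^2(d\mu_\a)$ and $L^2(d\mu_{\a+aj})$ is exactly Proposition \ref{prop:CZ sense}, and your observation that multiplying by $(xy)^{aj}$ shifts the entire $j$-dependence into the Bessel orders is correct. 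But the one step that constitutes the actual proof --- the $j$-uniform kernel estimates --- is both deferred (``where I expect the main obstacle to lie'') and, as sketched, broken. The bracket in your displayed formula is a \emph{difference} $-cxI_{\a+aj}(z)+c'yI_{\a+aj+1}(z)$ whose two terms nearly cancel as $t\to0$ near the diagonal: each term separately is of size $\sim x\,G_{\a,t}(x,y)/\sinh 2t$, and inserting this into $\int_0^1 t^{-1/2}(\cdots)\,dt$ produces a divergence worse than $|x-y|^{-1}(x+y)^{-2\a-1}$. Bounding $I_{\a+aj}$ and $I_{\a+aj+1}$ \emph{separately from above} by $I_{\a+a}$ and $I_{\a+a+1}$ gives no control whatsoever on the difference (an upper bound for $A$ and an upper bound for $B$ bounds neither $A-B$ nor $|A-B|$), and replacing the difference by the sum of absolute values destroys precisely the cancellation you need. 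So the ``reduction to the fixed operator $\mathcal{R}^{\a+a}$'' does not go through.

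The same problem recurs in your gradient estimate: the term $(aj)x^{aj-1}y^{aj}R^{\a+aj}(x,y)$ produced by differentiating $u_j$ is \emph{not} cancelled against anything in the paper; the extra factor $aj$ has to be absorbed by genuine additional decay in $j$ of the underlying integrals. This is where the paper's route differs in an essential way from yours: instead of uniform asymptotics of $I_\nu$ with the Airy-type transition at $z\sim\nu$ (which is delicate to make order-uniform), it substitutes Schl\"afli's Poisson-type representation $I_\nu(z)=z^\nu\int_{-1}^1 e^{-zs}\,\Pi_\nu(ds)$ and Meda's change of variable, so that all the $j$-dependence lands in explicit Beta-type integrals in $s$ and in ratios of Gamma functions. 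The uniformity in $j$ is then a matter of Stirling-type bookkeeping (Lemmas \ref{lem:A}, \ref{lem:0} and \ref{lem:breve}): e.g.\ the factor $aj$ from $\nabla u_j$ is beaten by ratios such as $\Gamma(aj-1/2)/\Gamma(aj+1)\simeq(aj)^{-3/2}$ arising from Lemma \ref{lem:0} with shifted parameters. To complete your argument you would either have to reproduce this kind of quantitative, order-uniform cancellation analysis directly on the Bessel functions, or adopt the integral-representation device; as written, the proposal establishes the reduction but not the proposition.
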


\begin{prop}\label{th:GC-RFmeasure}
Let $\{T_j\}$ be a sequence of operators and suppose that,
for some fixed $r>1$, these operators are uniformly bounded in
$L^r(\R_+,w\,d\mu_{\a})$ for every weight $w\in A_r^{\a}$, i. e.
\begin{equation*}
\int_0^\infty|T_jf(x)|^rw(x)\,d\mu_{\a}(x)\le
C\int_0^{\infty}|f(x)|^rw(x)\,d\mu_{\a}(x),
\end{equation*}
with $C$ independent of $j$. Then the vector valued
inequality
\begin{equation*}
\Big\|\Big(\sum_{j=0}^\infty|T_jf_j|^r\Big)^{1/r}\Big\|_{L^p(\R_+, w\,
d\mu_{\a})} \le
C\Big\|\Big(\sum_{j=0}^\infty|f_j|^r\Big)^{1/r}\Big\|_{L^p(\R_+,w\,
d\mu_{\a})}
\end{equation*}
holds for all $1<r,p<\infty$ and $w\in A_p^{\a}$.
\end{prop}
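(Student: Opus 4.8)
The plan is to recognize this as an instance of the Rubio de Francia extrapolation principle for vector-valued extensions, adapted to the space of homogeneous type $(\R_+, d\mu_\a, |\cdot|)$. The classical statement (cf. Rubio de Francia, or the Cruz-Uribe--Martell--P\'erez account) says that if a family of operators is uniformly bounded on $L^r(w)$ for all $w\in A_r$ and one fixed exponent $r$, then one automatically gets: (a) uniform boundedness on $L^p(w)$ for all $p$ with $1<p<\infty$ and all $w\in A_p$; and (b) the $\ell^r$-valued inequality on $L^p(w)$ for all such $p$ and $w$. The whole content of the proposition is to check that the machinery of extrapolation goes through verbatim in the Laguerre setting, where Lebesgue measure is replaced by $d\mu_\a(x)=x^{2\a+1}\,dx$ and $A_p$ by $A_p^\a$.

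The key steps, in order, are as follows. First I would record that $(\R_+, |\cdot|, d\mu_\a)$ is a space of homogeneous type: the measure $d\mu_\a$ of an interval $I$ centered at $x_0$ of radius $t$ satisfies the doubling condition, so the Hardy--Littlewood maximal operator $M_\a$ associated with $d\mu_\a$ is bounded on $L^p(\R_+,v\,d\mu_\a)$ precisely when $v\in A_p^\a$, and the $A_p^\a$ classes enjoy the usual structural properties (openness, the factorization $A_p^\a = A_1^\a\cdot (A_1^\a)^{1-p'}$, the reverse H\"older inequality, and self-improvement of exponents). Second, I would introduce the Rubio de Francia iteration operator adapted to $d\mu_\a$: given $s>1$ and $0\le h\in L^s(\R_+, d\mu_\a)$, set
\[
\mathcal{R}_\a h = \sum_{k=0}^\infty \frac{M_\a^k h}{2^k \|M_\a\|_{L^s(\R_+,d\mu_\a)}^k},
\]
where $M_\a^k$ is the $k$-fold iterate (and $M_\a^0 h = h$). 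This satisfies the three standard properties: $h\le \mathcal{R}_\a h$ pointwise; $\|\mathcal{R}_\a h\|_{L^s(d\mu_\a)}\le 2\|h\|_{L^s(d\mu_\a)}$; and $\mathcal{R}_\a h\in A_1^\a$ with $A_1^\a$-constant bounded by $2\|M_\a\|_{L^s(d\mu_\a)}$, hence independent of $h$. Third, with this tool the scalar extrapolation (part (a)) is the classical duality argument: fix $p$ and $w\in A_p^\a$; to bound $\|T_j f\|_{L^p(w\,d\mu_\a)}$ one tests against $0\le g\in L^{p'}(w\,d\mu_\a)$, builds an $A_1^\a$ (hence $A_r^\a$) majorant of a suitable power of $g$ via $\mathcal{R}_\a$, applies H\"older and the hypothesis at the exponent $r$ with that weight, and uses that the resulting constant is uniform in $j$ because the weight constant produced by $\mathcal{R}_\a$ is uniform; the cases $p>r$, $p<r$, and $p=r$ are handled by the customary splitting. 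Fourth, for the vector-valued conclusion one applies the scalar result not to the $T_j$ individually but to a single "diagonal" operator: the standard trick is to note that the $\ell^r$-valued inequality on $L^p(w\,d\mu_\a)$ is itself equivalent, by the converse direction of extrapolation applied on the Banach-valued side, to a scalar weighted estimate. Concretely, one first upgrades the hypothesis to the $\ell^r$-valued inequality \emph{at the exponent $r$} — i.e. $\|(\sum_j|T_jf_j|^r)^{1/r}\|_{L^r(w\,d\mu_\a)}\le C\|(\sum_j|f_j|^r)^{1/r}\|_{L^r(w\,d\mu_\a)}$ for all $w\in A_r^\a$ — which is immediate since at $p=r$ the left side is $(\sum_j \int |T_jf_j|^r w\,d\mu_\a)^{1/r}$ and one applies the hypothesis term by term. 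Then one extrapolates this $\ell^r$-valued estimate from $r$ to general $p$: the extrapolation theorem applies equally to operators taking values in (or defined on) $\ell^r$, because the iteration operator $\mathcal{R}_\a$ and the duality argument only see the scalar weight $w$, not the range space. This yields exactly the claimed inequality for all $1<p<\infty$ and $w\in A_p^\a$.

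The main obstacle — really the only non-bookkeeping point — is making sure that \emph{every} ingredient of the Euclidean extrapolation proof has a counterpart for the measure $d\mu_\a$ with constants that do not degenerate: boundedness of $M_\a$ on $L^s(w\,d\mu_\a)$ for $w\in A_s^\a$ (so that $\mathcal{R}_\a$ is well-defined and its $A_1^\a$ output has controlled constant), the $A_p^\a$ self-improvement/openness needed when $p<r$, and the fact that $A_r^\a$ weights arise as products of $A_1^\a$ weights. All of these are known for general spaces of homogeneous type, and $(\R_+,|\cdot|,d\mu_\a)$ is such a space, so the proof reduces to invoking these facts and reproducing the classical argument; the uniformity in $j$ is automatic throughout because $j$ never enters the weight-theoretic constants. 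I would therefore present the proof as: (i) verify $d\mu_\a$ is doubling and state the $M_\a$/$A_p^\a$ dictionary; (ii) define $\mathcal{R}_\a$ and list its three properties; (iii) run the duality/extrapolation argument to pass from exponent $r$ to exponent $p$, observing that it is insensitive to whether the target is scalar or $\ell^r$; (iv) combine with the trivial $\ell^r$-valued estimate at $p=r$ to conclude.
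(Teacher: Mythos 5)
Your proposal is correct and follows essentially the same route as the paper: the authors also reduce the vector-valued inequality to the trivial term-by-term estimate at $p=r$ applied to the pair $f=(\sum_j|f_j|^r)^{1/r}$, $g=(\sum_j|T_jf_j|^r)^{1/r}$, and then invoke a Rubio de Francia extrapolation theorem for pairs of nonnegative functions on $(\R_+,d\mu_\a)$, proved exactly as you describe via the maximal operator $M_\a$, the iteration weights, and the factorization theorem following Duoandikoetxea. The only presentational difference is that the paper packages the weight-theoretic machinery into a citation of \cite{Duo} and \cite{Rubio2} rather than spelling out the iteration operator, but the underlying argument is identical.
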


Theorem \ref{th:LpLq Lag convolution dim1}
can be deduced easily by combining Proposition \ref{th:weighted
inequality} and Proposition \ref{th:GC-RFmeasure}. Indeed, by taking $T_j=u_j\mathcal{R}^{\a+aj}(u_j^{-1}f)$ we have that $T_j$ is bounded on $L^r(\R_+,w\,d\mu_{\a})$ for $1<r<\infty$ and all $w\in A_r^{\a}$, uniformly in $j\ge0$, by Proposition \ref{th:weighted inequality}. Then, Proposition \ref{th:GC-RFmeasure} does the rest.

In Subsection \ref{subsec:weighted inequality} we will prove
Proposition \ref{th:weighted inequality} and the proof of
Proposition \ref{th:GC-RFmeasure} will be given in Subsection
\ref{subsec:extra}.

Notation. The constants that do not depend on relevant quantities
will be denoted by $C$ and can change from one line to another
without further comment. We also note that a constant denoted by
$C_{\a}$ depends on $\a$ but not on $j$.

\subsection{Proof of Proposition \ref{th:weighted inequality}}
\label{subsec:weighted inequality}
The proof of Proposition \ref{th:weighted inequality} is based on
the theory of Calder\'on-Zygmund operators defined on spaces of
homogeneous type, where the classical weighted Calder\'on-Zygmund theory
is still valid with proper adjustments. Indeed, we can adapt the proof in the classical
case for the Lebesgue measure, see \cite{Duolibro}. In order to do this it is enough to have at our disposal the weighted  boundedness of an appropriate maximal operator. In our case we should  consider
the boundedness on $L^p(\R_+,w\,
d\mu_{\a})$, with $w\in A_p^\a$, for $1<p<\infty$ of the maximal Hardy-Littlewood operator defined as
\begin{equation}
\label{eq:max-calderon}
M_\a f(x)=\sup_{x\in I}\frac{1}{\mu_\a(I)}\int_{I}|f(y)|\,
d\mu_\a(y),
\end{equation}
which follows from a more general result by A. Calder\'on \cite{Calderon}.

We will write the
operator $u_j\mathcal{R}^{\a+aj}(u_j^{-1}f)$ as an integral
operator in the Calder\'on-Zygmund sense with kernel as follows
$$
u_j\mathcal{R}^{\a+aj}(u_j^{-1}f)(x)=\int_0^{\infty}(xy)^{aj}R^{\a+aj}(x,y)f(y)\,d\mu_\a(y),
$$
where $R^{\a+aj}$ is the kernel of the Riesz transform associated with the orthonormal system $\{\ell_{k}^{\a+aj}\}_{k\ge 0}$.
Then, we will prove growth and smoothness estimates for the
kernel. These estimates are contained in the proposition below, that is the heart of the matter.
\begin{prop}\label{prop:CZestimates} Let $\a\ge -1/2$, $a\ge 1$, and $j\ge
0$. Then
\begin{equation}\label{eq:growth}
|(xy)^{aj}R^{\a+aj}(x,y)|\le \frac{C_1}{\mu_{\a}(B(x,|x-y|))},
\quad x\neq y,
\end{equation}
\begin{equation}\label{eq:smooth}
|\nabla_{x,y}[(xy)^{aj}R^{\a+aj}(x,y)]|\le
\frac{C_2}{|x-y|\mu_{\a}(B(x,|x-y|))}, \quad x\neq y,
\end{equation}
with $C_1$ and  $C_2$ independent of $j$, and where
$\mu_{\a}(B(x,|x-y|))=\int_{B(x,|x-y|)}\,d\mu_\a$ and $B(x,|x-y|)$
is the ball of center $x$ and radius $|x-y|$.
\end{prop}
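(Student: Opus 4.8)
The plan is to obtain an explicit integral representation for the kernel $R^{\a+aj}(x,y)$ of the Riesz transform $\mathcal{R}^{\a+aj}$ and then track the dependence on $j$ through the factor $(xy)^{aj}$. Writing $\beta = \a + aj$, one starts from the spectral formula \eqref{eq:defRiesz espectral} and uses the subordination identity $(L_\beta)^{-1/2} = c\int_0^\infty t^{-1/2} e^{-tL_\beta}\,dt$ together with the known Mehler-type formula for the Laguerre heat semigroup $e^{-tL_\beta}$ (whose kernel involves modified Bessel functions $I_\beta$ and the Gaussian-type factors $\exp(-\tfrac14 \coth(2t)(x^2+y^2))$, $\exp(\tfrac{xy}{\sinh(2t)})$-type weights). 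Applying $\delta_\beta = \frac{d}{dx}+x$ under the integral sign produces $R^\beta(x,y)$ as an explicit, though messy, integral over $t\in(0,\infty)$. The crucial point is that the $j$-dependence enters only through $\beta$ in the Bessel function $I_\beta$ and in the normalization; the idea is that the product $(xy)^\beta$ appearing in the small-argument asymptotics of $I_\beta((xy)/\sinh 2t)$ — recall $I_\beta(z)\sim (z/2)^\beta/\Gamma(\beta+1)$ — is exactly what $(xy)^{aj}$ is designed to tame, so that after multiplying by $(xy)^{aj}$ one is left with estimates that are \emph{uniform} in $\beta\ge\a$.

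After this representation is in place, I would change variables (e.g. $s = e^{-2t}$ or a $\tanh$-type substitution) to bring the integral into a form where the elementary inequality $u^\gamma e^{-u}\le C_\gamma$ and the bound $I_\beta(z)\le C z^\beta e^z$ (valid uniformly for $\beta$ in a half-line, with $C$ absolute) can be used termwise. Splitting the $t$-integral into the ``local'' region $t$ small (which governs the singularity as $x\to y$) and the ``global'' region $t$ away from $0$ (which is harmless because the spectrum of $L_\beta$ is bounded below, giving exponential decay), one compares the resulting bound with $\mu_\a(B(x,|x-y|))^{-1} \asymp |x-y|^{-1}\max(x,|x-y|)^{-(2\a+1)}$. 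The smoothness estimate \eqref{eq:smooth} is obtained the same way: differentiating in $x$ or $y$ under the integral either brings down a factor that is $O(1/t)$ in the local region (producing the extra $|x-y|^{-1}$) or hits the Bessel function, and one uses $I_\beta'(z) = \tfrac12(I_{\beta-1}(z)+I_{\beta+1}(z))$ to keep the bounds uniform in $\beta$. One checks that $\nabla_{x,y}$ acting on the prefactor $(xy)^{aj}$ contributes terms of size $aj/x$ or $aj/y$ times the kernel; these must be absorbed, which is where the gain $(xy)^{aj}$ from the Bessel asymptotics has to be used with a little room to spare (this is exactly why the hypothesis $a\ge 1$, rather than $a>0$, is natural).

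The main obstacle I expect is \textbf{uniformity in $j$}: the naive Calderón–Zygmund estimates for a single fixed order $\beta$ are classical (they appear in \cite{Now-Ste-Ad, Now-Ste-JFA}), but here the constants must not blow up as $\beta=\a+aj\to\infty$. Concretely, one must prove a bound of the form $|I_\beta(z)|\le C(z/2)^\beta e^z$ with $C$ \emph{independent of $\beta\ge\a$}, and more delicately one must control, uniformly in $\beta$, the $t$-integral near $t=0$ after the $(xy)^{aj}$ cancellation — the Gaussian factor $\exp(-\tfrac14\coth(2t)(x-y)^2)$ provides the needed decay, but one has to verify that no $\beta$-dependent constant sneaks in through the $\Gamma(\beta+1)$ normalizations when they are matched against the Bessel asymptotics. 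Once these uniform special-function estimates are isolated as lemmas, the geometric comparison with $\mu_\a(B(x,|x-y|))$ is routine and essentially the same computation as in the unweighted fixed-order case.
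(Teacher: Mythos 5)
Your overall skeleton matches the paper's: subordination through the Laguerre heat semigroup, reduction to growth and smoothness estimates for the kernel relative to the homogeneous space $(\R_+,d\mu_\a,|\cdot|)$, and the correct identification of uniformity in $j$ as the crux. But the plan stops exactly where the real work begins, and the mechanism you propose for that work does not suffice. The bound $I_\beta(z)\le C(z/2)^{\beta}e^{z}$ (even in the sharper form with $\Gamma(\beta+1)^{-1}$) together with the small-argument asymptotics only controls the regime $xy/\sinh(2t)\lesssim\beta$; in the opposite regime $I_\beta(z)\sim e^{z}/\sqrt{2\pi z}$ and there is no $(z/2)^\beta$ factor to extract, while in the transition region $z\sim\beta$ one would need uniform (Olver-type) asymptotics in $\beta$ to avoid losing $j$-dependent constants. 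You have not explained how to glue these regimes, and that gluing is precisely the hard part. The paper sidesteps it entirely by substituting Schl\"afli's Poisson-type representation $I_\beta(z)=z^{\beta}\int_{-1}^{1}e^{-zs}\,\Pi_\beta(ds)$ into the heat kernel and applying Meda's change of variable; after that, every estimate reduces to a single beta-type integral (Lemma \ref{lem:0}) of the form
\begin{equation*}
\int_0^1\frac{(1-u)^{c+d-1/2}}{(A-Bu)^{c+d+\lambda+1/2}}\,du\le\frac{C_c\,\Gamma(d)\Gamma(\lambda)}{\Gamma(d+\lambda)}\cdot\frac{1}{A^{c+1/2}B^{d}(A-B)^{\lambda}},\qquad A=(x+y)^2,\ B=4xy,\ d=aj,
\end{equation*}
and it is the factor $B^{-d}=(4xy)^{-aj}$ here --- not the Bessel asymptotics --- that cancels the prefactor $(xy)^{aj}$ uniformly in $j$, with Stirling controlling the Gamma-ratios (Lemmas \ref{lem:A} and \ref{lem:breve}). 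Some analogue of this quantitative lemma is indispensable and is absent from your plan.

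A second concrete gap is the smoothness estimate. Differentiating $(xy)^{aj}R^{\a+aj}(x,y)$ produces the term $(aj)x^{aj-1}y^{aj}R^{\a+aj}(x,y)$, so you must prove the \emph{strengthened} growth bound $\frac{aj}{x}|R^{\a+aj}(x,y)|\le C(xy)^{-aj}|x-y|^{-1}\mu_\a(B(x,|x-y|))^{-1}$: compared with \eqref{eq:growth} this costs an extra $|x-y|^{-1}$ \emph{and} must absorb the extra factor $aj/x$. You mention this term but only say it "must be absorbed, with a little room to spare"; in the paper the absorption happens through ratios such as $\Gamma(aj-1/2)/\Gamma(aj+1)\simeq(aj)^{-3/2}$ coming out of Lemma \ref{lem:0} with shifted parameters ($d=aj-1/2$ or $d=aj-1$), combined with a case analysis $y\le 2x$ versus $y>2x$. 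None of this is reproduced or replaced in your proposal, so as written the argument is a program rather than a proof.
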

The case $j=0$  of the previous proposition is contained in
\cite{Now-Ste-Ad}, so we will focus on the proof of
\eqref{eq:growth} and \eqref{eq:smooth} for $j\ge 1$ only.

The heat semigroup
related to $L_{\a}$ is initially defined in
$L^2(\R_+,d\mu_{\a})$ as
$$
T_{\a,t}f=\sum_{k=0}^{\infty}e^{-t(4k+2\a+2)}\langle
f, \ell_k^{\a}\rangle_{d\mu_{\a}}\ell_k^{\a},\quad t>0.
$$
We can write the
heat semigroup $\{T_{\a,t}\}_{t>0}$ as an integral operator
$$
T_{\a,t}f(x)=\int_{0}^\infty G_{\a,t}(x,y)f(y)\,d\mu_{\a}(y).
$$
The Laguerre heat kernel is given by
\begin{equation*}
G_{\a,t}(x,y)=\sum_{k=0}^{\infty}e^{-t(4k+2\a+2)}
\ell_k^{\a}(x)\ell_k^{\a}(y).
\end{equation*}
The explicit expression for the Laguerre heat kernel is known and it
can be found in \cite[(4.17.6)]{Lebedev}:
$$
G_{\a,t}(x,y)=(\sinh2t)^{-1}\exp\Big(-\frac12\coth(2t)(x^2+y^2)\Big)
(xy)^{-\a}I_{\a}\Big(\frac{xy}{\sinh2t}\Big),
$$
with $I_{\a}$ denoting the modified Bessel function of the first
kind and order $\a$, see \cite[Chapter 5]{Lebedev}.

It can be seen in \cite[Section 3]{Now-Ste-Ad} that the Riesz
transform \eqref{eq:defRiesz espectral} is an operator associated,
in the Calder\'on-Zygmund sense, with the kernel
\begin{equation*}
R^{\a}(x,y)=\frac{1}{\sqrt{\pi}}\int_0^{\infty}\delta_\a G_{\a,t}(x,y)t^{-1/2}\,dt.
\end{equation*}

Let us see now that the operators $u_j\mathcal{R}^{\a+aj}(u_j^{-1}f)$
are associated, in the Calder\'on-Zygmund sense, with the kernel given by
\begin{equation}\label{eq:kernel 2}
(xy)^{aj}R^{\a+aj}(x,y)=\frac{(xy)^{aj}}{\sqrt{\pi}}\int_0^{\infty}\delta_\a G_{\a+aj,t}(x,y)t^{-1/2}\,dt.
\end{equation}
\begin{prop}\label{prop:CZ sense}
Let $\a\ge -1/2$, $a\ge 1$, and $u_j(x)=x^{aj}$, $x\in\R_+$,
$j=0,1,\ldots$. Take $f,g\in C_c^{\infty}(\R_+)$ having disjoint
supports. Let $u_j\mathcal{R}^{\a+aj}(u_j^{-1}f)$ be defined by
\eqref{eq:defRiesz espectral}. Then
$$
\langle u_j\mathcal{R}^{\a+aj}(u_j^{-1}f), g\rangle_{d\mu_{\a}}
=\int_0^{\infty}\int_0^{\infty}(xy)^{aj}R^{\a+aj}(x,y)f(y)
\overline{g(x)}\,d\mu_{\a}(y)\,d\mu_{\a}(x).
$$
\end{prop}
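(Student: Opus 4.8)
The plan is to identify $\mathcal{R}^{\a+aj}$ acting on the \emph{modified} inner product against test functions with the integral operator with kernel $R^{\a+aj}(x,y)$, and then simply relocate the weights $u_j^{\pm1}$. More precisely, by the spectral definition \eqref{eq:defRiesz espectral} applied with parameter $\a+aj$, and using that $\{x\,\ell_{k-1}^{\a+aj+1}\}_k$ is an orthonormal basis, I would first write
\[
\langle u_j\mathcal{R}^{\a+aj}(u_j^{-1}f), g\rangle_{d\mu_{\a}}
=\langle \mathcal{R}^{\a+aj}(u_j^{-1}f), u_j g\rangle_{d\mu_{\a}}
=\langle \mathcal{R}^{\a+aj}h, G\rangle_{d\mu_{\a}},
\]
where $h=u_j^{-1}f$ and $G=u_j g$. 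Since $f,g\in C_c^\infty(\R_+)$ with supports bounded away from $0$, the functions $h,G$ are again $C_c^\infty(\R_+)$ with disjoint supports. So it suffices to prove the $j=0$-type identity $\langle \mathcal{R}^{\beta}h, G\rangle_{d\mu_{\beta}}=\int\!\int R^{\beta}(x,y)h(y)\overline{G(x)}\,d\mu_{\beta}(y)\,d\mu_{\beta}(x)$ for $\beta=\a$ — but careful: the inner product here is $d\mu_\a$, not $d\mu_{\a+aj}$, so I cannot literally cite the $\beta=\a+aj$ statement. Instead I would run the argument directly.

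The key steps, in order. First, insert the subordination formula: from \eqref{eq:defRiesz espectral} one has, for $h\in L^2(\R_+,d\mu_{\a+aj})$ (and $h$ a nice test function this is fine),
\[
\mathcal{R}^{\a+aj}h(x)=\delta_{\a}(L_{\a+aj})^{-1/2}h(x)
=\frac{1}{\sqrt{\pi}}\int_0^\infty \delta_\a T_{\a+aj,t}h(x)\,t^{-1/2}\,dt,
\]
which holds pointwise because the spectrum of $L_{\a+aj}$ is bounded below by $2(\a+aj+1)>0$, so $\lambda^{-1/2}=\pi^{-1/2}\int_0^\infty e^{-t\lambda}t^{-1/2}\,dt$ converges and the resulting operator series converges. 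Second, unfold the heat semigroup as an integral operator, $T_{\a+aj,t}h(x)=\int_0^\infty G_{\a+aj,t}(x,y)h(y)\,d\mu_{\a+aj}(y)$, and observe $d\mu_{\a+aj}(y)=y^{2aj}\,d\mu_\a(y)$, so that $T_{\a+aj,t}h(x)=\int_0^\infty G_{\a+aj,t}(x,y)\,y^{2aj}h(y)\,d\mu_\a(y)$; writing $h=u_j^{-1}f$ turns $y^{2aj}h(y)=y^{aj}f(y)$. Third, pair against $\overline{G(x)}=x^{aj}\overline{g(x)}$ and integrate $d\mu_\a(x)$, producing the factor $(xy)^{aj}$ multiplying $\delta_\a G_{\a+aj,t}(x,y)$; then integrate in $t$ and recognize \eqref{eq:kernel 2}.

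The only real obstacle is \textbf{justifying the interchange of the $dt$-integral, the $d\mu_\a(y)$-integral, and the $d\mu_\a(x)$-integral}, i.e.\ a Fubini/Tonelli argument on the triple integral. This is where the disjointness of supports is used: on $\mathrm{supp}\,f\times\mathrm{supp}\,g$ one has $|x-y|\ge c>0$, and on that region the integrand $\delta_\a G_{\a+aj,t}(x,y)\,t^{-1/2}$ is absolutely integrable in $(x,y,t)$ — near $t=0$ the Gaussian factor $\exp(-\tfrac12\coth(2t)(x^2+y^2))$ combined with the argument-decay of $I_{\a+aj}$ forces super-polynomial decay (since $x,y$ are bounded and $|x-y|\ge c$, the exponent behaves like $-c'/t$), and near $t=\infty$ the factor $(\sinh 2t)^{-1}$ together with $t^{-1/2}$ is integrable, while $\delta_\a$ brings down only polynomially bounded factors in $x,y,\coth(2t)$ that are harmless against the exponential. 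One should also note the $(xy)^{aj}$ prefactor is bounded on the compact supports, uniformly in the fixed test functions, so it does not affect integrability (though of course the bound may depend on $j$, which is irrelevant for this qualitative identity — the $j$-uniformity enters only later, in Proposition \ref{prop:CZestimates}). Once absolute integrability is in hand, Fubini yields the claimed identity and the proof is complete. I would also briefly remark that the right-hand side is finite by the same estimate, so the operator is genuinely associated, in the Calderón–Zygmund sense, with the kernel \eqref{eq:kernel 2}.
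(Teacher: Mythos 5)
Your proposal is correct, but it takes a longer road than the paper's, and in fact you stop one step short of the paper's much quicker reduction. The paper's proof is three lines: setting $h_1=u_j^{-1}f$ and $h_2=u_j^{-1}g$, it rewrites the pairing as $\langle \mathcal{R}^{\a+aj}h_1,h_2\rangle_{d\mu_{\a+aj}}$ (absorbing the powers $x^{\pm aj}$ into the measure turns $d\mu_{\a}$ into $d\mu_{\a+aj}$), invokes \cite[Proposition 3.3]{Now-Ste-Ad} --- which is exactly the statement that $\mathcal{R}^{\beta}$ is associated in the Calder\'on--Zygmund sense with the kernel $R^{\beta}(x,y)$ relative to $d\mu_{\beta}$ --- with $\beta=\a+aj$, and then undoes the change of measure. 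You moved $u_j$ onto $g$ but kept the measure $d\mu_{\a}$ and concluded the known statement could not be cited; however $\langle F,u_jg\rangle_{d\mu_{\a}}=\langle F,u_j^{-1}g\rangle_{d\mu_{\a+aj}}$, which is precisely the paper's reduction. What you do instead --- rederiving the kernel representation from the subordination formula and justifying the triple interchange by absolute integrability off the diagonal --- amounts to a correct reproof of the cited result: the decay analysis (Gaussian decay of order $e^{-c'/t}$ near $t=0$ because $|x-y|\ge c>0$ on $\operatorname{supp}f\times\operatorname{supp}g$, decay of order $(\sinh 2t)^{-1-\a-aj}$ near $t=\infty$, polynomial factors produced by $\delta_{\a}$ being harmless) is the right one, and the $j$-dependence of the constants is indeed irrelevant for this qualitative identity. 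The one point you assert rather than prove is that the spectral series \eqref{eq:defRiesz espectral} coincides with $\pi^{-1/2}\int_0^\infty\delta_{\a}T_{\a+aj,t}h\,t^{-1/2}\,dt$, i.e.\ that $\delta_{\a}$ may be pulled inside the subordination integral and the spectral sum; that interchange is the actual content of the proposition you declined to cite, so your argument is self-contained only modulo that step. Either route is acceptable; the paper's is shorter because it outsources exactly this point.
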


\begin{proof}
Let $u_j^{-1}f=h_1$ and $u_j^{-1}g=h_2$. Then,
\begin{align*}
\langle u_j\mathcal{R}^{\a+aj}(u_j^{-1}f), g\rangle_{d\mu_{\a}}&
=\langle \mathcal{R}^{\a+aj}h_1, h_2\rangle_{d\mu_{\a+aj}}\\
&=\int_0^{\infty}\int_0^{\infty}R^{\a+aj}(x,y)h_1(y)
\overline{h_2(x)}\,d\mu_{\a+aj}(y)\,d\mu_{\a+aj}(x)\\
&=\int_0^{\infty}\int_0^{\infty}(xy)^{aj}R^{\a+aj}(x,y)f(y)
\overline{g(x)}\,d\mu_{\a}(y)\,d\mu_{\a}(x),
\end{align*}
since the second identity above was proven in \cite[Proposition 3.3]{Now-Ste-Ad}.
\end{proof}
Observe that the operator $u_j\mathcal{R}^{\a+aj}(u_j^{-1}f)$ is bounded on $L^2(\R_+,d\mu_{\a})$. Indeed, its norm is the same as the norm of $\mathcal{R}^{\a+aj}$ on $L^2(\R_+,d\mu_{\a+aj})$ (this follows from the proof of Proposition \ref{prop:CZ sense}), which is proved to be finite after \eqref{eq:defRiesz espectral}.

We will find a suitable expression for the kernel \eqref{eq:kernel
2}, and this task boils down to expressing the corresponding heat
kernel in an appropriate way. We use Schl\"afli's integral
representation of Poisson's type for modified Bessel function, see
\cite[(5.10.22)]{Lebedev},
\begin{equation*}
I_{\a}(z)=z^{\a}\int_{-1}^1\exp(-zs) \,\Pi_{\a}(ds), \quad
|\arg z|<\pi, \,\,\, \a>-\frac12,
\end{equation*}
where the measure $\Pi_{\a}(du)$ is given by
\begin{equation*}
\Pi_{\a}(du)=\frac{(1-u^2)^{\a-1/2}\,du}{\sqrt{\pi}2^{\a}\Gamma(\a+1/2)}, \quad \a>-1/2.
\end{equation*}
In the limit case $\a=-1/2$, we put $\pi_{-1/2}=\tfrac12(\delta_{-1}+\delta_1)$.
Consequently, for $\a\ge -1/2$, the kernel $G_{\a,t}(x,y)$ can be expressed as
$$
G_{\a,t}(x,y)=\big(\sinh(2t)\big)^{-1-\a}\int_{-1}^1
\exp\Big(-\frac12\coth(2t)(x^2+y^2)-\frac{xys}{\sinh(2t)}\Big)\,\Pi_{\a}(ds).
$$
Let
\begin{equation*}
q_{\pm}=q_{\pm}(x,y,s)=x^2+y^2\pm2xys.
\end{equation*}
Meda's change of variable
\begin{equation*}
t=\frac12\log\frac{1+\xi}{1-\xi}, \quad \xi\in(0,1),
\end{equation*}
leads to
\begin{equation*}
G_{\a,t}(x,y)=\Big(\frac{1-\xi^2}{2\xi}\Big)^{1+\a}\int_{-1}^1
\exp\Big(-\frac{1}{4\xi}q_+(x,y,s)-\frac{\xi}{4}q_-(x,y,s)\Big)\,\Pi_{\a}(ds).
\end{equation*}

Let
\begin{equation}
\label{function-beta}
\b_{\a}(\xi)=\sqrt{\frac{2}{\pi}}\Big(\frac{1-\xi^2}{2\xi}\Big)^{1+\a}\frac{1}{1-\xi^2}
\Big(\log\Big(\frac{1+\xi}{1-\xi}\Big)\Big)^{-1/2}.
\end{equation}
In this way, by \eqref{eq:kernel 2} we get
\begin{align}
R^{\a}&(x,y)=\int_0^1\b_{\a}(\xi)\delta_{\a}\int_{-1}^1
\exp\Big(-\frac{q_+}{4\xi}-\frac{\xi q_-}{4}\Big)\,\Pi_{\a}(ds)\,d\xi \notag\\
&=\int_{-1}^1\int_0^1\b_{\a}(\xi)\Big(x-\frac{1}{2\xi}(x+ys)-\frac{\xi}{2}(x-ys)\Big)
\exp\Big(-\frac{q_+}{4\xi}-\frac{\xi q_-}{4}\Big)\,d\xi\,\Pi_{\a}(ds).\label{eq:kernel-beta}
\end{align}
The application of Fubini's theorem above can be justified, see \cite[Proposition 5.6]{Now-Ste-Ad}.

Throughout the proofs in this section, we will use several elementary facts, that are listed below.
First, observe that
\begin{equation}\label{eq:beta}
\b_{\a}(\xi)\le C 2^{-\a}\begin{cases} \xi^{-\a-3/2}, \quad 0<\xi\le 1/2,\\
\xi^{-\a-1}(1-\xi^2)^{\a}(-\log(1-\xi^2))^{-1/2}, \quad 1/2<\xi<1,
\end{cases}
\end{equation}
\begin{equation}\label{eq:des1}
 |x-ys|\le \sqrt{q_-}
 \end{equation}
 and
 \begin{equation}\label{eq:des2}
 |x-ys|^\theta\exp\Big(-\frac{\xi q_-}{4}\Big)\le C\xi^{-\theta/2}, \qquad \theta >0.
 \end{equation}
Inequality \eqref{eq:des1} is immediate, and \eqref{eq:des2} follows from \eqref{eq:des1} and the inequality
 \begin{equation}\label{eq:gammas}
 x^{\gamma}e^{-x}\le \gamma^{\gamma}e^{-\gamma}, \quad x\in \mathbb{R}_+, \quad \gamma\in \R_+.
 \end{equation}
 Let $b>0$. Define
 $h(u):=(1-u)^{b}u^{v-1/2}$, for $u\in(0,1)$. Then, for $v\ge1/2$
\begin{equation}\label{eq:funcion h}
h(u)\le \Big(\frac{b}{b+v-1/2}\Big)^{b}.
\end{equation}
We will also use frequently the following fact without further mention
$$
\frac{\Gamma(z+r)}{\Gamma(z+t)}\simeq z^{r-t}, \quad z>0, \quad r,t\in\R.
$$

Apart from this, we need several technical lemmas that provide the
tools to prove the main estimates. First we show the estimates for
the measure of the balls $B(x,(|x-y|))$ in the space $(\mathbb{R}_+,
d\mu_{\a})$. The result presented here is a one-dimensional
version of \cite[Proposition 3.2]{Now-Ste-Ad}. It will be used
tacitly throughout the proofs.
\begin{lem}\label{lem:measure}
Let $\a\ge -1/2$. Then, for all $x,y\in \mathbb{R}_+$,
$$
\mu_{\a}(B(x,|x-y|))\simeq |x-y|(x+y+|x-y|)^{2\a+1}\simeq|x-y|(x+y)^{2\a+1}.
$$
\end{lem}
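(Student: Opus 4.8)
The plan is to compute $\mu_{\a}(B(x,|x-y|))$ explicitly, or at least up to two-sided constants, by integrating the density $u^{2\a+1}$ over the interval $B(x,|x-y|)\cap\R_+$. Write $\rho=|x-y|$ and note that $B(x,\rho)\cap\R_+=(\max\{0,x-\rho\},x+\rho)$. Thus
\[
\mu_{\a}(B(x,\rho))=\int_{\max\{0,x-\rho\}}^{x+\rho}u^{2\a+1}\,du
=\frac{1}{2\a+2}\Big((x+\rho)^{2\a+2}-\big(\max\{0,x-\rho\}\big)^{2\a+2}\Big),
\]
valid since $\a\ge-1/2$ gives $2\a+2\ge1>0$. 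The first step is to dispose of the second equivalence: since $0\le|x-y|\le x+y$ trivially would be false in general, one notes instead that $x+y+|x-y|=2\max\{x,y\}$ and $x+y\le x+y+|x-y|\le 2(x+y)$, so $(x+y+|x-y|)^{2\a+1}\simeq(x+y)^{2\a+1}$ with constants depending only on $\a$ (using $2\a+1\ge0$). Hence it suffices to prove the first equivalence.

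The core step is a case analysis according to whether $\rho$ is comparable to $x$ or small relative to $x$. If $x\le 2\rho$, then $x+\rho\simeq\rho$ and also $x+y+\rho\le 3\rho+y$; here one checks $x+y\le 3\rho$ when $y\le x+\rho\le 3\rho$, and $y\le x+\rho$ always holds (since $y\le x+|x-y|$), so $x+y+\rho\simeq\rho$; meanwhile $\mu_{\a}(B(x,\rho))\le(x+\rho)^{2\a+2}/(2\a+2)\simeq\rho^{2\a+2}=\rho\cdot\rho^{2\a+1}\simeq\rho(x+y+\rho)^{2\a+1}$, and for the lower bound one integrates over, say, the subinterval $(x+\rho/2,x+\rho)$ on which $u\simeq\rho$, giving $\mu_{\a}(B(x,\rho))\gtrsim\rho\cdot\rho^{2\a+1}$. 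If instead $x>2\rho$, then $\max\{0,x-\rho\}=x-\rho>x/2$, the whole interval $(x-\rho,x+\rho)$ satisfies $u\simeq x$, so $\mu_{\a}(B(x,\rho))\simeq\rho\cdot x^{2\a+1}$; and in this regime $y\le x+\rho<2x$ while also $y\ge x-\rho>x/2$ would be false—rather $y$ can be anything in $[x-\rho,x+\rho]$... actually $y$ need not lie in the ball, but $|x-y|=\rho<x/2$ forces $y\in(x/2,3x/2)$, hence $x+y+\rho\simeq x$, so again $\mu_{\a}(B(x,\rho))\simeq\rho(x+y+\rho)^{2\a+1}$.

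The main obstacle is purely bookkeeping: keeping the constants uniform and correctly tracking which of $x,y$ is larger, together with the behavior of the map $u\mapsto u^{2\a+2}$ near $0$ when $x-\rho$ is close to $0$. The one genuinely delicate point is the elementary inequality $(x+\rho)^{2\a+2}-(x-\rho)^{2\a+2}\simeq\rho\cdot x^{2\a+1}$ when $\rho<x/2$, which follows from the mean value theorem: the left side equals $2\rho(2\a+2)\xi^{2\a+1}$ for some $\xi\in(x-\rho,x+\rho)\subset(x/2,3x/2)$, hence $\xi^{2\a+1}\simeq x^{2\a+1}$. No other ingredient is needed; the argument is elementary and the constants depend only on $\a$.
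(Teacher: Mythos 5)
Your argument is correct and complete. Note that the paper itself gives no proof of this lemma: it simply observes that it is the one-dimensional specialization of Proposition~3.2 in the Nowak--Stempak reference \cite{Now-Ste-Ad}. Your direct computation --- integrating the density $u^{2\a+1}$ over $B(x,\rho)\cap\R_+=(\max\{0,x-\rho\},x+\rho)$ and splitting into the cases $x\le 2\rho$ and $x>2\rho$ --- is exactly the standard way such ball-volume estimates are established, so you have in effect supplied the omitted proof rather than found a different one. All the key steps check out: the reduction of the second equivalence to $x+y\le x+y+|x-y|\le 2(x+y)$ together with $2\a+1\ge 0$; the identification $x+y+\rho\simeq\rho$ when $x\le 2\rho$ (using $y\le x+\rho$) with the lower bound obtained from the subinterval $(x+\rho/2,x+\rho)$; and the mean-value-theorem estimate $(x+\rho)^{2\a+2}-(x-\rho)^{2\a+2}\simeq\rho\,x^{2\a+1}$ when $\rho<x/2$. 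Two spots of prose are muddled but harmless: the inequality $|x-y|\le x+y$ is in fact true for all $x,y\in\R_+$ (your parenthetical suggests otherwise before you correct course), and in the second case the digression about where $y$ lies is unnecessary --- since $\rho=|x-y|$, the point $y$ is an endpoint of the ball, and $\rho<x/2$ immediately gives $y\in(x/2,3x/2)$, which is the only fact you use. The constants throughout depend only on $\a$, as required.
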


We will also use the following.
\begin{lem}\label{lem:A}
Let $k, m \in \R$ be such that $m\ge -1/2$ and $k+m>-1/2$. Then
\begin{equation*}
\int_0^1\xi^{-k}\b_{m}(\xi)\exp\Big(-\frac{q_+}{4\xi}\Big)\, d\xi\le C_k \frac{2^{m}\Gamma(m+k+1/2)}{q_+^{m+k+1/2}}.
\end{equation*}
\end{lem}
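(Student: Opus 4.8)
The plan is to exploit the recurring structure $\int_0^1 \xi^{-k}\beta_m(\xi)\exp(-q_+/(4\xi))\,d\xi$ by splitting the integration interval at $\xi=1/2$ and using the two regimes of the estimate \eqref{eq:beta} for $\beta_m$. On $0<\xi\le 1/2$ we have $\beta_m(\xi)\le C2^{-m}\xi^{-m-3/2}$, so the integrand is bounded by $C2^{-m}\xi^{-k-m-3/2}\exp(-q_+/(4\xi))$; on $1/2<\xi<1$ we use $\beta_m(\xi)\le C2^{-m}\xi^{-m-1}(1-\xi^2)^{m}(-\log(1-\xi^2))^{-1/2}$, and here $q_+/(4\xi)\ge q_+/4$, so the exponential can simply be bounded by $\exp(-q_+/4)$ (or retained as needed) and what remains is a convergent $\xi$-integral.

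First I would handle the main piece $0<\xi\le 1/2$. Substituting $\xi=q_+/(4u)$ (equivalently $u=q_+/(4\xi)$) turns $\int_0^{1/2}\xi^{-k-m-3/2}\exp(-q_+/(4\xi))\,d\xi$ into a constant times $q_+^{-m-k-1/2}\int_{q_+/2}^{\infty}u^{m+k-1/2}e^{-u}\,du$, which is bounded by $q_+^{-m-k-1/2}\,\Gamma(m+k+1/2)$ provided $m+k+1/2>0$, i.e. exactly the hypothesis $k+m>-1/2$. This already produces the asserted bound $C_k\,2^{m}\Gamma(m+k+1/2)q_+^{-m-k-1/2}$ for the first part (after restoring the $2^{-m}$ from \eqref{eq:beta} and noting $2^{-m}\cdot 4^{m+k+1/2}$ contributes a $2^m$ together with a $k$-dependent constant). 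For the tail piece $1/2<\xi<1$, I would bound $\exp(-q_+/(4\xi))\le \exp(-q_+/8)$ (since $\xi<1$) and note that $\int_{1/2}^1 \xi^{-k-m-1}(1-\xi^2)^{m}(-\log(1-\xi^2))^{-1/2}\,d\xi$ is finite (the singularity at $\xi=1$ is integrable because $(1-\xi^2)^m(-\log(1-\xi^2))^{-1/2}$ is integrable near $1$ for every $m\ge -1/2$, and there is no singularity at $\xi=1/2$). Then $\exp(-q_+/8)\le C q_+^{-m-k-1/2}\cdot (q_+/8)^{m+k+1/2}e^{-q_+/8}\le C_k\, q_+^{-m-k-1/2}$ by \eqref{eq:gammas}, and since $q_+^{m+k+1/2}$ appears we must also compare with $2^m\Gamma(m+k+1/2)$: using $\Gamma(m+k+1/2)\ge c>0$ bounded below is not automatic, but we do have $\Gamma(m+k+1/2)\gtrsim (m+k+1/2)^{m+k-1/2}e^{-(m+k+1/2)}$ up to $2^m$-type factors via the asymptotics $\Gamma(z+r)/\Gamma(z+t)\simeq z^{r-t}$, so the tail is absorbed into the right-hand side.

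The step I expect to be the main obstacle is controlling the dependence on $m$ uniformly, i.e. making sure all the constants hidden in "$C$" in \eqref{eq:beta} and in the tail integral do not secretly grow with $m$ in a way that spoils the clean factor $2^m\Gamma(m+k+1/2)$; the substitution argument on $(0,1/2)$ is what makes the $m$-dependence transparent, and the tail on $(1/2,1)$ must be shown to contribute something no larger than this, which is where one uses $\int_{1/2}^1(1-\xi^2)^m(-\log(1-\xi^2))^{-1/2}\,d\xi \le C$ uniformly in $m\ge -1/2$ (indeed this integral is decreasing in $m$ for $m$ large, hence bounded). Once the $m$-uniformity of these elementary one-variable integrals is pinned down, assembling the two pieces gives the lemma. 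Throughout, $C_k$ denotes a constant allowed to depend on $k$ (and on nothing else relevant), consistent with the statement.
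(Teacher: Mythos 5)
Your treatment of the range $0<\xi\le 1/2$ is correct and is essentially the paper's argument (the substitution $u=q_+/(4\xi)$ is precisely how the quoted estimate $\int_0^1\xi^{-a-1}e^{-T/\xi}\,d\xi\le T^{-a}\Gamma(a)$ is proved), and your bookkeeping $2^{-m}\cdot 4^{m+k+1/2}=C_k\,2^m$ there is right. The gap is on $1/2<\xi<1$, where you decouple the power of $\xi$ from the exponential: you bound $e^{-q_+/(4\xi)}$ by $e^{-q_+/8}$, bound the remaining $\xi$-integral by a constant, and only afterwards convert $e^{-q_+/8}$ into the required power of $q_+$ via \eqref{eq:gammas}. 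That last conversion costs $8^{m+k+1/2}(m+k+1/2)^{m+k+1/2}e^{-(m+k+1/2)}\simeq 8^{m+k+1/2}\Gamma(m+k+1/2)(m+k+1/2)^{1/2}$, and the $\xi$-integral you discard is \emph{not} uniformly small: near $\xi=1/2$ the factor $\xi^{-m-k-1}(1-\xi^2)^m$ is of size $(3/2)^m$, so $\int_{1/2}^1\xi^{-k-m-1}(1-\xi^2)^m(-\log(1-\xi^2))^{-1/2}\,d\xi\simeq (3/2)^m$ up to polynomial factors. Multiplying out (with the $2^{-m}$ from \eqref{eq:beta}) you obtain at best a factor $3^m$ in place of the claimed $2^m$ (and $4^m$ if you only use boundedness of the tail integral). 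This is not cosmetic: the lemma is applied with $m=\alpha+aj$, $j\to\infty$, and in Lemma \ref{lem:breve} the factor $2^m$, combined with the $2^m$ hidden in the normalization of $\Pi_m(ds)$, is exactly cancelled by the $(4xy)^{aj}$ coming out of Lemma \ref{lem:0}; any surviving factor $c^{aj}$ with $c>1$ destroys the uniformity in $j$ on which Proposition \ref{prop:CZestimates}, and hence Theorems \ref{th:LpLq Lag convolution dim1} and \ref{th:osci}, depend.

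The repair is to keep the power of $\xi$ attached to the exponential when invoking \eqref{eq:gammas}, which is what the paper does: for $\xi\in(1/2,1)$ write $\xi^{-m-k-1}\le 2^{3/2}\,\xi\cdot\xi^{-m-k-1/2}$ and estimate
\[
\xi^{-m-k-1/2}e^{-q_+/(4\xi)}=\Big(\frac{4}{q_+}\Big)^{m+k+1/2}\Big(\frac{q_+}{4\xi}\Big)^{m+k+1/2}e^{-q_+/(4\xi)}
\le\Big(\frac{4}{q_+}\Big)^{m+k+1/2}(m+k+1/2)^{m+k+1/2}e^{-(m+k+1/2)},
\]
which by Stirling is comparable to $(4/q_+)^{m+k+1/2}\Gamma(m+k+1/2)(m+k+1/2)^{1/2}$. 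The leftover integral $\int_{1/2}^1\xi(1-\xi^2)^m(-\log(1-\xi^2))^{-1/2}\,d\xi$ no longer carries the $\xi^{-m}$ growth and equals at most $\tfrac12\int_0^\infty e^{-(m+1)w}w^{-1/2}\,dw=\Gamma(1/2)/(2(m+1)^{1/2})$ after the change of variable $w=-\log(1-\xi^2)$; this absorbs the $(m+k+1/2)^{1/2}$, and $2^{-m}\cdot 4^{m+k+1/2}=C_k\,2^m$ then gives exactly the stated constant.
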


\begin{proof}
Split the integral into two parts, $\int_0^{1/2}+\int_{1/2}^1$.
For the first integral, the result follows from \eqref{eq:beta} and the following estimate
$$
\int_0^{1}\xi^{-a-1}e^{-T/\xi}\,d\xi\le T^{-a}\Gamma(a), \quad a>0,
$$
which, in turn, is a slight modification of \cite[Lemma 2.1]{NoSt}.
For the second integral, from \eqref{eq:beta}, the task is reduced to estimating
$$
2^{-m}\int_{1/2}^{1}\xi^{-m-k-1}
(1-\xi^2)^{m}(-\log(1-\xi^2))^{-1/2}\exp\Big(-\frac{q_+}{4\xi}\Big)\,d\xi.
$$
Now, since $\xi\in(1/2,1)$, by \eqref{eq:gammas} and Stirling's formula we get
\begin{align*}
\xi^{-m-k-1}\exp\big(-\frac{q_+}{4\xi}\big)&\le C \xi \cdot\xi^{-m-k-1/2}\exp\big(-\frac{q_+}{4\xi}\big)\\
&\le C\xi\Big(\frac{4}{q_+}\Big)^{m+k+1/2}(m+k+1/2)^{m+k+1/2}e^{-(m+k+1/2)}\\
&\simeq C \xi\Big(\frac{4}{q_+}\Big)^{m+k+1/2}\Gamma(m+k+3/2)(m+k+1/2)^{-1/2}\\
&=C\xi\Big(\frac{4}{q_+}\Big)^{m+k+1/2}\Gamma(m+k+1/2)(m+k+1/2)^{1/2}.
\end{align*}
With this, we get
\begin{align*}
2^{-m}\int_{1/2}^{1}\xi^{-m-k-1}
&(1-\xi^2)^{m}(-\log(1-\xi^2))^{-1/2}\exp\Big(-\frac{q_+}{4\xi}\Big)\,d\xi\\
&\le C\Big(\frac{4}{q_+}\Big)^{m+k+1/2}\Gamma(m+k+1/2)(m+k+1/2)^{1/2}\\& \kern10pt \times
2^{-m}\int_{1/2}^1\xi(1-\xi^2)^{m}(-\log(1-\xi^2))^{-1/2}\,d\xi\\
&\le C_k \frac{2^{m}}{q_+^{m+k+1/2}}\,\Gamma(m+k+1/2),
\end{align*}
where in the last step we made the change of variable
$-\log(1-\xi^2)=w$ and noticed that the resulting integral is bounded
by
$\int_0^{\infty}e^{-(m+1)w}w^{-1/2}\,dw=\frac{\Gamma(1/2)}{(m+1)^{1/2}}$.
\end{proof}

The lemma below is Lemma 5.3 in \cite{CR}.
\begin{lem}\label{lem:0}
Let $c\ge -1/2$, $0<B<A$, $\lambda>0$ and $d\ge0$. Then
$$
\int_0^1\frac{(1-s)^{c+d-1/2}}{(A-Bs)^{c+d+\lambda+1/2}}\,ds\le \frac{C(d)}{A^{c+1/2}B^{d}(A-B)^{\lambda}},
$$
where
$$
C(d)=C_c\begin{cases}
\frac{\Gamma(d)\Gamma(\lambda)}{\Gamma(d+\lambda)}, & d>0,\\[5pt]
1, &d=0.
\end{cases}
$$
\end{lem}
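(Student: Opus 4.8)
The plan is to normalise the integral by two elementary substitutions so that it becomes a \emph{truncated} Euler Beta integral, and then to dominate that truncated integral by the complete one; the precise shape of the constant $C(d)$ will then drop out of the Beta integral automatically. (This is Lemma~5.3 of \cite{CR}, but the argument is short.)

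First I substitute $u=1-s$, which replaces the left-hand side by $I:=\int_0^1 u^{c+d-1/2}(A-B+Bu)^{-(c+d+\lambda+1/2)}\,du$, and then rescale $v=\tfrac{B}{A-B}u$, using $A-B+Bu=(A-B)(1+v)$, to arrive at
\[
I=\frac{J(R)}{B^{\,c+d+1/2}(A-B)^{\lambda}},\qquad J(R):=\int_0^{R}\frac{v^{\,c+d-1/2}}{(1+v)^{\,c+d+\lambda+1/2}}\,dv,\qquad R:=\frac{B}{A-B}.
\]
Since $B/A=R/(1+R)$, the asserted inequality is then \emph{exactly} equivalent to the one-variable estimate $J(R)\le C(d)\bigl(R/(1+R)\bigr)^{c+1/2}$, valid for every $R>0$, which is what I would now prove.

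For $d>0$ I would use the exact splitting
\[
\frac{v^{\,c+d-1/2}}{(1+v)^{\,c+d+\lambda+1/2}}=\Bigl(\frac{v}{1+v}\Bigr)^{c+1/2}\,\frac{v^{\,d-1}}{(1+v)^{\,d+\lambda}}.
\]
Since $v\mapsto v/(1+v)$ is increasing and $c+\tfrac12\ge0$, on $(0,R)$ the first factor is at most $(R/(1+R))^{c+1/2}$; pulling it out and enlarging the remaining integral to $(0,\infty)$ yields $J(R)\le (R/(1+R))^{c+1/2}\int_0^\infty v^{d-1}(1+v)^{-(d+\lambda)}\,dv=\tfrac{\Gamma(d)\Gamma(\lambda)}{\Gamma(d+\lambda)}\,(R/(1+R))^{c+1/2}$, which is precisely the required bound (indeed with $C_c=1$). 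The step I regard as the crux is this bookkeeping: a naive pointwise lower bound $A-Bs\ge A-B$ of the denominator only produces a power of $A-B$ on the right and is worthless as $B\to A$, whereas the monotonicity of $v/(1+v)$ — equivalently, in the pre-scaled variable $u$, the fact that $u/(A-B+Bu)$ attains its maximum $1/A$ at $u=1$ — is exactly what makes the correct power $A^{c+1/2}$ appear; once that is arranged, the constant $\Gamma(d)\Gamma(\lambda)/\Gamma(d+\lambda)$ is just the value of a complete Beta integral and hence is manifestly independent of $A$ and $B$.

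The case $d=0$ is degenerate, since then $\int_0^\infty v^{d-1}(1+v)^{-(d+\lambda)}\,dv$ diverges at the origin (and the original integral converges only when $c>-1/2$), so I would dispose of it separately by splitting on the size of $R$. If $R\le1$, I bound $(1+v)^{-(c+\lambda+1/2)}\le1$ and use $R\le 2R/(1+R)$ to get $J(R)\le\int_0^R v^{c-1/2}\,dv=\tfrac{R^{c+1/2}}{c+1/2}\le\tfrac{2^{c+1/2}}{c+1/2}(R/(1+R))^{c+1/2}$; if $R\ge1$, I enlarge the integral to $(0,\infty)$, so that $J(R)\le \tfrac{\Gamma(c+1/2)\Gamma(\lambda)}{\Gamma(c+\lambda+1/2)}$, and absorb this finite constant via $(R/(1+R))^{c+1/2}\ge 2^{-(c+1/2)}$. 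Taking $C(0)$ to be the larger of the two constants completes the proof, the only nontrivial ingredient in the whole argument being the evaluation of the Euler Beta integral.
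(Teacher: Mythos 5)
Your proof is correct, and there is nothing in the paper to compare it with: the authors do not prove this lemma here but quote it verbatim as Lemma~5.3 of \cite{CR}. Your argument is self-contained and checks out. The two substitutions reduce the claim exactly to $J(R)\le C(d)\,(R/(1+R))^{c+1/2}$ with $R=B/(A-B)$; the factorization $v^{c+d-1/2}(1+v)^{-(c+d+\lambda+1/2)}=(v/(1+v))^{c+1/2}\,v^{d-1}(1+v)^{-(d+\lambda)}$, together with the monotonicity of $v/(1+v)$ and $c+\tfrac12\ge0$, is precisely what produces the power $A^{c+1/2}$; and the complete Beta integral yields the stated constant $\Gamma(d)\Gamma(\lambda)/\Gamma(d+\lambda)$ for $d>0$ with $C_c=1$, which is what the applications in Section~\ref{subsec:weighted inequality} need since $d=aj\to\infty$ there. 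Two minor observations, both to your credit rather than against you: (i) in the corner case $d=0$, $c=-1/2$ the left-hand side is $+\infty$, so the statement is only meaningful for $c>-1/2$ when $d=0$, as you note; (ii) your constant in the case $d=0$ depends on $\lambda$ as well as on $c$, and it must (the left-hand side is of order $B(c+\tfrac12,\lambda)\,B^{-(c+1/2)}(A-B)^{-\lambda}$ as $B\to A$, which blows up relative to the right-hand side as $\lambda\to0^+$), so the bare ``$C_c$'' in the statement is slightly loose; this is harmless here because the lemma is only invoked with $\lambda\in\{1/2,1,3/2,2\}$.
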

The following Lemma will also be hepful.

\begin{lem}
\label{lem:breve}
Let $\alpha\ge -1/2$, $a\ge 1$, $j=1,2,\dots$, $k> 0$ and
\begin{equation*}
I_{k}=\int_{-1}^{1}\int_0^1 \xi^{-k}\beta_{\alpha+aj}(\xi)\exp\Big(-\frac{q_+}{4\xi}\Big)\, d\xi \,\Pi_{\alpha+aj}(ds).
\end{equation*}
Then
\begin{equation*}
I_{k}\le \frac{ C_{\alpha,k}}{(x+y)^{2\alpha+1}(xy)^{aj}|x-y|^{2k}}.
\end{equation*}
\end{lem}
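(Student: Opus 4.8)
The plan is to reduce the estimate of $I_k$ to an application of Lemma \ref{lem:A} with parameter $m=\alpha+aj$, and then to extract the decaying factor $(xy)^{-aj}$ from the resulting power of $q_+$. First I would carry out the $s$-integration last: for each fixed $s\in[-1,1]$ the inner integral in $\xi$ is exactly of the form treated in Lemma \ref{lem:A} with $m=\alpha+aj$ and the same $k$, provided the hypotheses $m\ge -1/2$ and $k+m>-1/2$ hold — both are clear here since $\alpha\ge -1/2$, $a\ge 1$, $j\ge 1$ and $k>0$. Thus
\[
\int_0^1 \xi^{-k}\beta_{\alpha+aj}(\xi)\exp\Big(-\frac{q_+}{4\xi}\Big)\,d\xi
\le C_k\,\frac{2^{\alpha+aj}\,\Gamma(\alpha+aj+k+1/2)}{q_+^{\alpha+aj+k+1/2}}.
\]
Since the right-hand side does not depend on $s$ and $\Pi_{\alpha+aj}$ is a probability measure, integrating over $s$ against $\Pi_{\alpha+aj}(ds)$ leaves the bound unchanged, so
\[
I_k\le C_k\,\frac{2^{\alpha+aj}\,\Gamma(\alpha+aj+k+1/2)}{q_+^{\alpha+aj+k+1/2}}.
\]

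The core of the argument is then to absorb the growing factors $2^{\alpha+aj}$ and $\Gamma(\alpha+aj+k+1/2)$ against the decay coming from $q_+$. Here I would use two elementary facts available in the preamble: the estimate $q_+ = x^2+y^2+2xys \ge (x+y)^2/2$ (valid uniformly for $s\ge -1/2$, which holds on $\operatorname{supp}\Pi_{\alpha+aj}$ once $\alpha+aj$ is large — or more simply $q_+\ge\max\{(x-y)^2,\, (x+y)^2 - 4xy(1-s)^{+}\}$ and $q_+\ge 2xy(1+s)$ type bounds) together with the product decomposition. The key split is
\[
\frac{1}{q_+^{\alpha+aj+k+1/2}}
= \frac{1}{q_+^{\alpha+1/2}}\cdot\frac{1}{q_+^{aj}}\cdot\frac{1}{q_+^{k}} ,
\]
then bound $q_+^{\alpha+1/2}\gtrsim (x+y)^{2\alpha+1}$, $q_+^{aj}\gtrsim (2xy)^{aj}$ using $q_+\ge 2xy(1+s)$ — wait, this needs care near $s=-1$; instead I would use $q_+ \ge x^2+y^2 \ge 2xy$ to get $q_+^{aj}\ge (2xy)^{aj}$ unconditionally, and $q_+^{k}\ge |x-y|^{2k}$ since $q_+\ge (x-y)^2$ (again unconditionally, using $s\ge -1$). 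Combining,
\[
I_k\le C_k\,\frac{2^{\alpha+aj}\,\Gamma(\alpha+aj+k+1/2)}{(x+y)^{2\alpha+1}\,(2xy)^{aj}\,|x-y|^{2k}}
= C_k\,\frac{2^{\alpha}\,\Gamma(\alpha+aj+k+1/2)}{(x+y)^{2\alpha+1}\,(xy)^{aj}\,|x-y|^{2k}}.
\]

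What remains is the only genuinely delicate point: showing that the Gamma factor $\Gamma(\alpha+aj+k+1/2)$ does not spoil the bound, i.e.\ that it can be absorbed into a constant $C_{\alpha,k}$ independent of $j$. This is where the decay $(2xy)^{-aj}$ — which I peeled off too crudely above — must instead be used to beat the factorial growth of $\Gamma(\alpha+aj+k+1/2)\sim (aj)!$. The honest split is to keep one more power of $q_+$ in reserve: write $q_+^{-(\alpha+aj+k+1/2)}\le (x+y)^{-(2\alpha+1)}|x-y|^{-2k}\,q_+^{-aj}$ and then bound $2^{\alpha+aj}\Gamma(\alpha+aj+k+1/2)\,q_+^{-aj}$ using $q_+\ge 2xy$ \emph{and} $q_+\ge$ (a quantity growing with $j$ is \emph{not} available), so instead one uses that for the relevant ranges $2^{aj}\Gamma(\alpha+aj+k+1/2)(2xy)^{-aj}$ is controlled once we also invoke $q_+\ge \tfrac12(x+y+|x-y|)^2$ to gain, via $\Gamma(\alpha+aj+k+1/2)\le C_{\alpha,k}\,(aj)^{aj}e^{-aj}\sqrt{aj}$ (Stirling) and $(aj)^{aj}e^{-aj}\le C^{aj}(aj)!$-type bookkeeping, a geometric series in $j$. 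I expect the main obstacle to be exactly this bookkeeping: tracking the $j$-dependence of the Stirling-type bound for $\Gamma(\alpha+aj+k+1/2)$ against the $(xy)^{-aj}$ and any residual powers of $q_+$, so that the final constant is uniform in $j$; the measure and $q_+$ lower bounds are routine, but the factorial-versus-exponential matchup is the crux and must be done carefully, most likely by retaining a small fixed power $q_+^{-\varepsilon}$ or by using $q_+^{aj}\gtrsim (aj)^{aj}$-type inequalities only on the region where $xy\gtrsim j$ and handling $xy\lesssim j$ separately via \eqref{eq:gammas}.
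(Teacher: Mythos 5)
Your first step --- applying Lemma \ref{lem:A} with $m=\alpha+aj$ for each fixed $s$ --- is the same as the paper's, but the argument breaks down immediately afterwards. The resulting bound $C_k 2^{\alpha+aj}\Gamma(\alpha+aj+k+1/2)\,q_+^{-(\alpha+aj+k+1/2)}$ \emph{does} depend on $s$ through $q_+=x^2+y^2+2xys$, and the subsequent lower bounds you use to split off the powers are false on part of the range of integration: $q_+\ge x^2+y^2$ and $q_+\ge 2xy$ fail for every $s<0$ (take $s=-1$, $x=y$, where $q_+=0$). The only bound uniform in $s\in[-1,1]$ is $q_+\ge (x-y)^2$, and replacing $q_+$ by its infimum gives at best $|x-y|^{-(2\alpha+2aj+2k+1)}$, which is far worse than the target $(x+y)^{-(2\alpha+1)}(xy)^{-aj}|x-y|^{-2k}$ when $x\approx y$. (Also, $\Pi_{\alpha+aj}$ is not a probability measure; its total mass is $2^{-(\alpha+aj)}/\Gamma(\alpha+aj+1)$, and the normalizing factor $\Gamma(\alpha+aj+1/2)$ you discard is needed later.)

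The missing idea is that the $s$-integration must be kept and played against the singularity of $q_+^{-(\alpha+aj+k+1/2)}$ at $s=-1$: the density $(1-s^2)^{\alpha+aj-1/2}$ of $\Pi_{\alpha+aj}$ vanishes there to an order that grows with $j$, and this is exactly what produces the factors $(x+y)^{-(2\alpha+1)}$ and $(xy)^{-aj}$. Concretely, after the change of variable $s=1-2u$ one is left with $\int_0^1 u^{\alpha+aj-1/2}(1-u)^{\alpha+aj-1/2}\,((x+y)^2-4xyu)^{-(\alpha+aj+k+1/2)}\,du$, which is estimated by Lemma \ref{lem:0} with $c=\alpha$, $d=aj$, $\lambda=k$; this yields $(x+y)^{-(2\alpha+1)}(4xy)^{-aj}|x-y|^{-2k}$ together with the constant $\Gamma(aj)\Gamma(k)/\Gamma(aj+k)\simeq (aj)^{-k}$, which exactly cancels the growth $\Gamma(\alpha+aj+k+1/2)/\Gamma(\alpha+aj+1/2)\simeq (aj)^{k}$ coming from Lemma \ref{lem:A} and the normalization of $\Pi_{\alpha+aj}$, while $(4xy)^{-aj}$ absorbs the $4^{\alpha+aj}$. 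Your closing paragraph correctly identifies the $j$-uniformity of the constant as the crux, but the Stirling/case-splitting strategy you sketch cannot close it --- no geometric gain in $j$ is available from $q_+$ alone when $xy$ is small --- whereas the Gamma-ratio cancellation from Lemma \ref{lem:0} resolves it without any case analysis.
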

\begin{proof}
By applying Lemma \ref{lem:A} with $m=\alpha+aj$ and the change of variable $s=1-2u$ we have
\[
I_k\le C_{\alpha,k} 4^{\alpha+aj}\frac{\Gamma(\alpha+aj+k+1/2)}{\Gamma(\alpha+aj+1/2)}\int_0^1 \frac{u^{\alpha+aj-1/2}(1-u)^{\alpha+aj-1/2}}{((x+y)^2-4xyu)^{\alpha+aj+k+1/2}}\, du.
\]
Then, by Lemma \ref{lem:0} with $c=\alpha$, $d=aj$ and $\lambda=k$ we conclude that
\begin{align*}
 I_k&\le
\frac{\Gamma(\alpha+aj+k+1/2)\Gamma(aj)}{\Gamma(\alpha+aj+1/2)\Gamma(aj+k)}
 \frac{ C_{\alpha,k} }{(x+y)^{2\alpha+1}(xy)^{aj}|x-y|^{2k}}\\&\le
 \frac{ C_{\alpha,k} }{(x+y)^{2\alpha+1}(xy)^{aj}|x-y|^{2k}}.\qedhere
\end{align*}
\end{proof}

Now we pass to the proof of Proposition \ref{prop:CZestimates}. Remember that we will prove \eqref{eq:growth} an \eqref{eq:smooth} for $j\ge 1$.

\subsubsection{Growth estimates: proof of \eqref{eq:growth}}

Let the kernel $R^{\a+aj}(x,y)$ be as in \eqref{eq:kernel-beta}. We write
$$
R^{\a+aj}(x,y)=J_1-\frac12J_2-\frac12J_3,
$$
where
$$
J_1:=x\int_{-1}^1\int_0^1\b_{\a+aj}(\xi)\exp\Big(-\frac{q_+}{4\xi}-\frac{\xi q_-}{4}\Big)\,d\xi\,\Pi_{\a+aj}(ds),
$$
$$
J_2:=\int_{-1}^1(x+ys)\int_0^1\xi^{-1}\b_{\a+aj}(\xi)
\exp\Big(-\frac{q_+}{4\xi}-\frac{\xi q_-}{4}\Big)\,d\xi\,\Pi_{\a+aj}(ds),
$$
and
$$
J_3:=\int_{-1}^1(x-ys)\int_0^1\xi\b_{\a+aj}(\xi)\exp\Big(-\frac{q_+}{4\xi}-\frac{\xi q_-}{4}\Big)\,d\xi\,\Pi_{\a+aj}(ds).
$$
Note that each of the three integrands of the inner integrals above are positive.

Let us begin with the study of $J_2$. First, note that
\begin{align*}
|J_2|&\le|x-y|\int_{-1}^1\int_0^1\xi^{-1}\b_{\a+aj}(\xi)
\exp\Big(-\frac{q_+}{4\xi}\Big)\,d\xi\,\Pi_{\a+aj}(ds)\\
&\,\,\,\,+y\int_{-1}^1(1+s)\int_0^1\xi^{-1}\b_{\a+aj}(\xi)
\exp\Big(-\frac{q_+}{4\xi}\Big)\,d\xi\,\Pi_{\a+aj}(ds)=:J_{21}+J_{22}.
\end{align*}
For $J_{21}$, the estimate follows from Lemma \ref{lem:breve} with $k=1$.
Concerning $J_{22}$, we consider two cases. First, the case $y>2x$
is immediate, because it can be easily seen that $J_{22}\le C
J_{21}$. For the other case, when $y\le 2x$, we use Lemma \ref{lem:A}, the change of variable $s=1-2u$ and
\eqref{eq:funcion h} with $v=\a+aj$ and $b=1/2$, to get
\begin{align*}
J_{22}&\le C\frac{\sqrt{xy}\,\Gamma(\a+aj+3/2)}{\Gamma(\a+aj+1/2)}
\int_{-1}^1\frac{(1+s)(1-s^2)^{\a+aj-1/2}}{q_+^{\a+aj+3/2}}\,ds\\
&=C\frac{\sqrt{xy}\,\Gamma(\a+aj+3/2)4^{\a+aj}}{\Gamma(\a+aj+1/2)}
\int_{0}^1\frac{u^{\a+aj-1/2}(1-u)^{\a+aj+1/2}}{((x+y)^2-4xyu)^{\a+aj+3/2}}\,du\\
&\le C\frac{\sqrt{xy}\,\Gamma(\a+aj+3/2)4^{\a+aj}}{\Gamma(\a+aj+1/2)\sqrt{\a+aj}}
\int_{0}^1\frac{(1-u)^{\a+aj}}{((x+y)^2-4xyu)^{\a+aj+3/2}}\,du\\
&\le  C_\a\frac{\sqrt{xy}\,\Gamma(\a+aj+3/2)4^{\a+aj}}{\Gamma(\a+aj+1/2)\sqrt{\a+aj}}
\frac{\Gamma(aj+1/2)}{\Gamma(aj+1)}\frac{1}{(x+y)^{2\a+1}(4xy)^{aj+1/2}|x-y|}\\
&\le \frac{C_{\a}}{(xy)^{aj}\mu_{\a}(B(x,|x-y|))}
\end{align*}
where, in the last step, we applied Lemma \ref{lem:0} with $c=\a$, $d=aj+1/2$ and $\lambda=1/2$.

We continue with $J_3$.
 It follows from \eqref{eq:des2} that
\begin{align*}
 |J_3|&\le C \int_{-1}^1\int_0^1\xi^{1/2}\b_{\a+aj}(\xi)\exp\Big(-\frac{q_+}{4\xi}\Big)\,d\xi \,\Pi_{\a+aj}(ds)\\
 &\le C\int_{-1}^1\int_0^1\xi^{-1/2}\b_{\a+aj}(\xi)\exp\Big(-\frac{q_+}{4\xi}\Big)\,d\xi \,\Pi_{\a+aj}(ds)
 \end{align*}
and the required inequality is obtained by using Lemma \ref{lem:breve} with $k=1/2$.

Finally, we study $J_1$. We split the outer integral into two
parts, $J_1=x\int_{-1}^0+x\int_0^1=:xJ_{11}+xJ_{12}$. For the
first case, observe that $x<x-ys$. This and \eqref{eq:des2} imply
\begin{align*}
x|J_{11}|&\le \int_{-1}^0|x-ys|\int_0^1\b_{\a+aj}(\xi)\exp\Big(-\frac{q_+}{4\xi}-\frac{\xi q_-}{4}\Big)
\,d\xi\,\Pi_{\a+aj}(ds)\\
&\le C \int_{-1}^1\int_0^1\xi^{-1/2}\b_{\a+aj}(\xi)\exp\Big(-\frac{q_+}{4\xi}\Big)\,d\xi
\,\Pi_{\a+aj}(ds)
\end{align*}
and we can apply again Lemma \ref{lem:breve} with $k=1/2$. Concerning $xJ_{12}$, we have $x<x+ys$ in this case. Then
$$
x|J_{12}|\le \int_{-1}^1|x+ys|\int_0^1\xi^{-1}\b_{\a+aj}(\xi)
\exp\Big(-\frac{q_+}{4\xi}\Big)\,d\xi\,\Pi_{\a+aj}(ds),
$$
so this case is reduced to that one of $J_2$, and we are done.

\subsubsection{Smoothness estimates: proof of \eqref{eq:smooth}}

Observe that
\begin{equation}\label{eq:gradiente}
\frac{d}{dx}[(xy)^{aj}R^{\a+aj}(x,y)]=(aj)x^{aj-1}y^{aj}R^{\a+aj}(x,y)
+(xy)^{aj}\frac{d}{dx}(R^{\a+aj}(x,y)).
\end{equation}
Therefore, our first aim is to get the estimate
$$
\frac{aj}{x}R^{\a+aj}(x,y)\le \frac{C}{(xy)^{aj}|x-y|\mu_{\a}(B(x,|x-y|))}.
$$
Recall from the previous subsection that $R^{\a+aj}(x,y)$ can be
written in terms of three expressions $J_1$, $J_2$ and $J_3$. We
will prove the estimate above for each one of the corresponding
expressions. The proof is systematic and follows similar reasonings as in the growth estimates, so we sketch the hints.

Let us begin with $J_2$. As in the previous subsubsection, observe that
\begin{align*}
\frac{aj}{x}|J_2|&\le C \frac{aj}{x}\frac{\Gamma(\a+aj+3/2)4^{\a+aj}}{\Gamma(\a+aj+1/2)}\Big(|x-y|
\int_0^1\frac{u^{\a+aj-1/2}(1-u)^{\a+aj-1/2}}{((x+y)^2-4xyu)^{\a+aj+3/2}}\,du\\
&\,\,\,\,+ y\int_0^1\frac{u^{\a+aj-1/2}(1-u)^{\a+aj+1/2}}{((x+y)^2-4xyu)^{\a+aj+3/2}}\,du\Big)
=:I_{21}+I_{22}.
\end{align*}
Consider now two cases. First, if $2x\ge y$. For $I_{21}$, by
\eqref{eq:funcion h} with $v=\a+aj$ and $b=1/2$, and Lemma
\ref{lem:0} with $c=\a$, $d=aj-1/2$ and $\lambda=3/2$, we have
\begin{align*}
I_{21}&\le C\frac{|x-y|}{x}\frac{\Gamma(\a+aj+3/2)4^{\a+aj}}{\Gamma(\a+aj+1/2)}\frac{aj}{\sqrt{\a+aj}}
\int_0^1\frac{(1-u)^{\a+aj-1}}{((x+y)^2-4xyu)^{\a+aj+3/2}}\,du\\
&\le C_\a \frac{|x-y|}{x}\frac{\Gamma(\a+aj+3/2)4^{\a+aj}}{\Gamma(\a+aj+1/2)}\frac{aj}{\sqrt{\a+aj}}
\frac{\Gamma(aj-1/2)}{\Gamma(aj+1)}\\
&\qquad \times
\frac{1}{(x+y)^{2\a+1}(4xy)^{aj-1/2}|x-y|^3}\\
&\le \frac{C_{\a}}{(xy)^{aj}|x-y|\mu_{\a}(B(x,|x-y|))}.
\end{align*}
Similarly, for $I_{22}$, we use \eqref{eq:funcion h} with $v=\a+aj$ and
$b=1$, and Lemma \ref{lem:0} with $c=\a$, $d=aj$ and $\lambda=1$ to get
\[
I_{22}\le \frac{C_{\a}}{(xy)^{aj}|x-y|\mu_{\a}(B(x,|x-y|))}.
\]
Secondly, if $2x<y$. For $I_{21}$, observe that $|x-y|\sim y$, use
that $u^{\a+aj-1/2}(1-u)\le C$ and apply Lemma \ref{lem:0} with
$c=\a,\, d=aj-1$ and $\lambda=2$, thus
\begin{align*}
I_{21}&\le C aj\frac{|x-y|}{x}\frac{\Gamma(\a+aj+3/2)4^{\a+aj}}{\Gamma(\a+aj+1/2)}
\int_0^1\frac{(1-u)^{\a+aj-3/2}}{((x+y)^2-4xyu)^{\a+aj+3/2}}\,du\\
&\le C aj\frac{|x-y|^2}{xy}\frac{\Gamma(\a+aj+3/2)4^{\a+aj}}{\Gamma(\a+aj+1/2)}
\frac{\Gamma(aj-1)}{\Gamma(aj+1)}\frac{1}{(x+y)^{2\a+1}(4xy)^{aj-1}|x-y|^4}\\
&\le \frac{C_{\a}}{(xy)^{aj}|x-y|\mu_{\a}(B(x,|x-y|))}.
\end{align*}
For $I_{22}$, using that $(1-u)\le C$ and $y\le 2(y-x)$, it is clear that $I_{22}\le C I_{21}$ and we are done.

Now we pass to $\tfrac{aj}{x}J_3$. Applying Lemma \ref{lem:A} and reasoning as in the previous subsubsection for $J_3$, we have
$$
\frac{aj}{x}|J_3|\le C \frac{aj}{x}\frac{ 4^{\a+aj}\Gamma(\a+aj+1)}{\Gamma(\a+aj+1/2)}\int_0^1
\frac{u^{\a+aj-1/2}(1-u)^{\a+aj-1/2}}{((x+y)^2-4xyu)^{\a+aj+1}}\,du.
$$
We distinguish two cases. If $2x\ge y$, we use \eqref{eq:funcion
h} with $v=\a+aj$ and $b=1/2$, and Lemma \ref{lem:0} with $c=\a$,
$d=aj-1/2$ and $\lambda=1$ in order to get the result. If $2x<
y$, the fact that $u^{\a+aj-1/2}(1-u)\le C$ and Lemma
\ref{lem:0} with $c=\a$, $d=aj-1$ and $\lambda=3/2$ yield the
desired estimate.

Finally, with regard to $\tfrac{aj}{x}J_1$, we split
the outer
integral into two parts,
$\tfrac{aj}{x}J_1=\tfrac{aj}{x}x\int_{-1}^0+\tfrac{aj}{x}x\int_0^{1}=:
\tfrac{aj}{x}xI_{11}+\tfrac{aj}{x}xI_{12}$. As for the first
summand, analogously to the treatment of $xJ_{11}$ in the previous
subsubsection, by \eqref{eq:des2} we have
$$
\frac{aj}{x}x|I_{11}|\le C \frac{aj}{x}\int_{-1}^1\int_0^1\xi^{-1/2}\b_{\a+aj}(\xi)
\exp\Big(-\frac{q_+}{4\xi}\Big)\,d\xi\,\Pi_{\a+aj}(ds),
$$
and this case is reduced to the study of $\tfrac{aj}{x}|J_3|$ above.
Concerning $I_{12}$, observe that, with analogous reasonings as in the previous subsubsection,
$$
\frac{aj}{x}x\,|I_{12}|\le C\frac{aj}{x}\int_{-1}^1|x+ys|\int_0^1\xi^{-1}\b_{\a+aj}(\xi)
\exp\Big(-\frac{q_+}{4\xi}\Big)\,d\xi\,\Pi_{\a+aj}(ds).
$$
and we treat the last expression as we did with $\tfrac{aj}{x}|J_2|$.

Taking into account \eqref{eq:gradiente}, we proceed now with $(xy)^{aj}\frac{d}{dx}(R^{\a+aj}(x,y))$. From the expression for the kernel in \eqref{eq:kernel-beta}, we get
\begin{multline*}
(xy)^{aj}\frac{d}{dx}(R^{\a+aj}(x,y))\\
\begin{aligned}
&=(xy)^{aj}\int_{-1}^1\int_0^1
\Big(1-\frac{1}{2\xi}-\frac{\xi}{2}\Big)\b_{\a+aj}(\xi)
\exp\Big(-\frac{q_+}{4\xi}-\frac{\xi q_-}{4}\Big)\,d\xi\,\Pi_{\a+aj}(ds)\\
&\,\,\,\,+(xy)^{aj}\int_{-1}^1\int_0^1
\Big(x-\frac{(x+ys)}{2\xi}-\frac{(x-ys)\xi}{2}\Big)\\
&\,\,\,\,\,\times\Big(-\Big(\frac{x+ys}{2}\Big)\frac{1}{\xi}-
\Big(\frac{x-ys}{2}\Big)\xi\Big)\b_{\a+aj}(\xi)
\exp\Big(-\frac{q_+}{4\xi}-\frac{\xi q_-}{4}\Big)\,d\xi\,\Pi_{\a+aj}(ds)\\
&=:(xy)^{aj}(S_1+S_2).
\end{aligned}
\end{multline*}
Concerning $S_1$, we get
\begin{equation*}
|S_1|\le C\int_{-1}^1\int_0^1
\xi^{-1}\b_{\a+aj}(\xi)
\exp\Big(-\frac{q_+}{4\xi}\Big)\,d\xi\,\Pi_{\a+aj}(ds)
\end{equation*}
and we use Lemma \ref{lem:breve} with $k=1$ to obtain the result.

The study of $S_2$ is more involved. We write
\[
S_2=\sum_{m=1}^5 S_{2m}=\sum_{m=1}^5\int_{-1}^1\int_0^1z_m(x,y,\xi)\b_{\a+aj}(\xi)
\exp\Big(-\frac{q_+}{4\xi}-\frac{\xi q_-}{4}\Big)\,d\xi\,\Pi_{\a+aj}(ds),
\]
where $z_1(x,y,\xi)=-x\Big(\frac{x+ys}{2}\Big)\frac{1}{\xi}$;
$z_2(x,y,\xi)=-x\Big(\frac{x-ys}{2}\Big)\xi$;
$z_3(x,y,\xi)=\Big(\frac{x+ys}{2\xi}\Big)^2$;
$z_4(x,y,\xi)=\frac{(x^2-y^2s^2)}{2}$; and
$z_5(x,y,\xi)=\Big(\frac{(x-ys)\xi}{2}\Big)^2$. Observe that, if
$s<0$ then $|z_1|\le\frac{|x^2-y^2s^2|}{2\xi}$; otherwise, if $s>0$ then
$|z_1|\le\Big(\frac{x+ys}{\xi}\Big)^2=|z_3|$. Note also that
$|z_4|\le C\frac{|x^2-y^2s^2|}{\xi}$. Therefore,
$|S_{21}|\le |S_{23}|+Q$, where
$$
Q:=\int_{-1}^1\int_0^1\frac{|x^2-y^2s^2|}{\xi}\b_{\a+aj}(\xi)
\exp\Big(-\frac{q_+}{4\xi}-\frac{\xi q_-}{4}\Big)\,d\xi\,\Pi_{\a+aj}(ds).
$$
In this way, if we get the desired estimates for $|S_{23}|$ and
$Q$, then we immediately obtain the same estimates for $|S_{21}|$ and
$|S_{24}|$. Concerning $Q$, by \eqref{eq:des2}, we get
\begin{align*}
Q&\le C \int_{-1}^1\int_0^1\frac{(x+ys)}{\xi^{3/2}}\b_{\a+aj}(\xi)
\exp\Big(-\frac{q_+}{4\xi}\Big)\,d\xi\,\Pi_{\a+aj}(ds)\\
& \le  C |x-y|\int_{-1}^1\int_0^1\xi^{-3/2}\b_{\a+aj}(\xi)
\exp\Big(-\frac{q_+}{4\xi}\Big)\,d\xi\,\Pi_{\a+aj}(ds)\\
&\,\,\,\, +C y\int_{-1}^1(1+s)\int_0^1\xi^{-3/2}\b_{\a+aj}(\xi)
\exp\Big(-\frac{q_+}{4\xi}\Big)\,d\xi\,\Pi_{\a+aj}(ds)
=:Q_{1}+Q_{2}.
\end{align*}
The estimate for $Q_{1}$ follows immediately from Lemma
\ref{lem:breve} with $k=3/2$. As for
$Q_{2}$, we use Lemma \ref{lem:A} with $k=3/2$ and $m=\a+aj$, \eqref{eq:funcion h} with $v=\a+aj$ and $b=1/2$, and Lemma \ref{lem:0} with $c=\a+1/2$, $d=aj$ and $\lambda=1$, so that
\begin{align*}
Q_{2}&\le C_\a (x+y)\frac{\Gamma(\a+aj+2)4^{\a+aj}}{\Gamma(\a+aj+1/2)\sqrt{\a+aj}}
\frac{\Gamma(aj)}{\Gamma(aj+1)}\frac{1}{(x+y)^{2\a+2}(4xy)^{aj}|x-y|^2}\\
&\le \frac{C_{\a}}{(xy)^{aj}|x-y|\mu_{\a}(B(x,|x-y|))}.
\end{align*}
Now, for $|S_{23}|$ we can write
\begin{align*}
|S_{23}|&\le C |x-y|^2\int_{-1}^1\int_0^1\xi^{-2}\b_{\a+aj}(\xi)\exp\Big(-\frac{q_+}{4\xi}-
\frac{\xi q_-}{4}\Big)\,d\xi\,\Pi_{\a+aj}(ds)\\
&+C y^2\int_{-1}^1(1+s)^2\int_0^1\xi^{-2}\b_{\a+aj}(\xi)\exp\Big(-\frac{q_+}{4\xi}-
\frac{\xi q_-}{4}\Big)\,d\xi\,\Pi_{\a+aj}(ds)\\&=:T_{1}+T_{2}.
\end{align*}
The
estimate for $T_{1}$ follows immediately by Lemma \ref{lem:breve} with $k=2$. Concerning $T_{2}$ we
distinguish two cases. If $y>2x$, then $y^2\simeq |x-y|^2$ and
this case is reduced to the study of $T_{1}$. If $y\le 2x$, then we use Lemma \ref{lem:A} with $k=2$ and $m=\a+aj$, \eqref{eq:funcion h} with $v=\a+aj$ and $b=1$, and
Lemma \ref{lem:0} with $c=\a$, $d=aj+1$ and $\lambda=1$, to obtain
\[
T_{2}\le \frac{C_{\a}}{(xy)^{aj}|x-y|\mu_{\a}(B(x,|x-y|))}.
\]

For $S_{22}$, observe that $|x|\le C (|x-ys|+|x+ys|$), hence
\begin{align*}
|S_{22}|&\le C \int_{-1}^1\int_0^1|x+ys|\frac{|x-ys|}{2}\xi\b_{\a+aj}(\xi)
\exp\Big(-\frac{q_+}{4\xi}-\frac{\xi q_-}{4}\Big)\,d\xi\,\Pi_{\a+aj}(ds)\\
&\,\,\,\,+C\int_{-1}^1\int_0^1|x-ys|\frac{|x-ys|}{2}\xi\b_{\a+aj}(\xi)
\exp\Big(-\frac{q_+}{4\xi}-\frac{\xi q_-}{4}\Big)\,d\xi\,\Pi_{\a+aj}(ds)\\&\le S_{24}+P,
\end{align*}
where $P$ is the second integral. The factor $S_{24}$ was already studied above, since that case
was reduced to the study of $Q$. On the other hand, we use \eqref{eq:des2} with $\theta=2$, and we get
\begin{align*}
P &\le C\int_{-1}^1\int_0^1\b_{\a+aj}(\xi)
\exp\Big(-\frac{q_+}{4\xi}\Big)\,d\xi\,\Pi_{\a+aj}(ds)\\
&\le C\int_{-1}^1\int_0^1\xi^{-1}\b_{\a+aj}(\xi)
\exp\Big(-\frac{q_+}{4\xi}\Big)\,d\xi\,\Pi_{\a+aj}(ds)
\end{align*}
and the estimate follows from Lemma \ref{lem:breve} with $k=1$. Finally, note that $S_{25}\le P$
and we are done.

\subsection{Proof of Proposition \ref{th:GC-RFmeasure}}
\label{subsec:extra}
The proof of Proposition \ref{th:GC-RFmeasure} is a
consequence of the extrapolation theorem of Rubio de Francia, see \cite{Rubio1, Rubio2},
adapted to our setting. Such adaptation is the following.
\begin{thm}\label{th:extra-RF}
Let $\alpha>-1$. Suppose that for some pair of nonnegative functions $(f,g)$, for
some fixed $1\le r<\infty$ and for all $w\in A_r^\a$ we have
$$
\int_0^\infty g(x)^rw(x)\,d\mu_{\a}(x)\le
C\int_0^{\infty}f(x)^rw(x)\,d\mu_{\a}(x),
$$
with $C$ depending only on $w$. Then, for all $1<p<\infty$ and all
$w\in A_p^\a$ we have
$$
\int_0^\infty g(x)^pw(x)\,d\mu_{\a}(x)\le C\int_0^{\infty}
f(x)^pw(x)\,d\mu_{\a}(x).
$$
\end{thm}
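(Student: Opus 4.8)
The plan is to deduce Theorem~\ref{th:extra-RF} from the classical Rubio de Francia extrapolation theorem by recasting our setting as an instance of the abstract version. The space $(\R_+,d\mu_\a,|\cdot|)$ is a space of homogeneous type: by Lemma~\ref{lem:measure} the balls satisfy $\mu_\a(B(x,2\rho))\le C\mu_\a(B(x,\rho))$, so the doubling property holds, and the Muckenhoupt classes $A_p^\a$ are exactly the $A_p$ classes built from this doubling measure and the Euclidean metric on $\R_+$. Since the Hardy--Littlewood maximal operator $M_\a$ defined in \eqref{eq:max-calderon} is bounded on $L^p(\R_+,w\,d\mu_\a)$ for every $w\in A_p^\a$ and $1<p<\infty$ (by Calder\'on's theorem, as already invoked), all the structural hypotheses needed for the extrapolation machinery are in place. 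The statement to be proved is precisely the ``pairs of functions'' formulation of extrapolation (as in Cruz-Uribe--Martell--P\'erez, or Duoandikoetxea), and the standard proof carries over verbatim once one works with $d\mu_\a$ in place of Lebesgue measure.

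Concretely, I would first recall the Rubio de Francia iteration algorithm adapted to $(\R_+,d\mu_\a)$: given $1<p<\infty$ and $w\in A_p^\a$, and given a nonnegative $h\in L^{p}(\R_+,w\,d\mu_\a)$ (respectively $L^{(p/r)'}(\R_+,w^{1-(p/r)'}\,d\mu_\a)$ in the relevant duality), define
\[
Rh=\sum_{k=0}^\infty \frac{M_\a^k h}{2^k \|M_\a\|^k},
\]
where $M_\a^k$ is the $k$-fold iterate and $\|M_\a\|$ is the operator norm on the appropriate weighted $L^q$ space. Then $h\le Rh$, $\|Rh\|\le 2\|h\|$, and $Rh\in A_1^\a$ with $A_1^\a$ constant bounded by $2\|M_\a\|$. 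The two cases $p>r$ and $p<r$ (and $p=r$, which is the hypothesis itself) are handled separately: for $p>r$ one applies $R$ in the space $L^{(p/r)'}$ to build a weight $W$ with $wW^{1-r}\in A_r^\a$ after multiplying by $w$ appropriately, then uses the hypothesis with that weight together with H\"older's inequality; for $p<r$ one instead applies $R$ directly to a function dominating $g$ and uses the factorization $A_r^\a$ weights as products, again reducing to the hypothesis. These are the classical arguments and require only that $M_\a$ be bounded on all weighted $L^q(\R_+,w\,d\mu_\a)$ with $w\in A_q^\a$, which we have.

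The main point that requires care—though it is not really an obstacle—is to verify that the properties of $A_p^\a$ weights used in the classical proof hold in this measure-metric space: namely the openness of the $A_p^\a$ condition (self-improvement), the characterization $A_p^\a=\bigcup_{1<q<p}A_q^\a$, the factorization $w\in A_p^\a \iff w=w_0 w_1^{1-p}$ with $w_0,w_1\in A_1^\a$ (Jones' factorization theorem), and duality $w\in A_p^\a\iff w^{1-p'}\in A_{p'}^\a$. All of these are known to hold in arbitrary spaces of homogeneous type, and by Lemma~\ref{lem:measure} our space qualifies; I would simply cite the relevant references (e.g., Str\"omberg--Torchinsky, or the general account in \cite{Duolibro}) rather than reprove them. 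With these ingredients assembled, Theorem~\ref{th:extra-RF} follows by the verbatim transcription of the proof of the extrapolation theorem for pairs of functions, replacing $dx$ by $d\mu_\a$ and $M$ by $M_\a$ throughout; I would present the $p>r$ and $p<r$ cases explicitly to make the paper self-contained, since the only novelty is the ambient space.
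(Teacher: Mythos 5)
Your proposal is correct and follows essentially the same route as the paper: the paper likewise deduces the result by transplanting the standard Rubio de Francia extrapolation argument (in Duoandikoetxea's formulation for pairs of functions) to $(\R_+,d\mu_\a)$, with the two key ingredients being the factorization of $A_p^\a$ weights and the iteration algorithm built from the maximal operator $M_\a$ of \eqref{eq:max-calderon}, whose weighted boundedness comes from Calder\'on's theorem. The only cosmetic difference is that you organize the argument into the cases $p>r$ and $p<r$, whereas the paper follows Duoandikoetxea's unified treatment via the factorization lemma; both are standard and equivalent.
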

\begin{proof}
We follow the proof given by J. Duoandikoetxea in \cite[Theorem~3.1]{Duo}.
The main ingredients are the factorization theorem (see \cite[Lemma~2.1]{Duo})
and the construction of the Rubio de Francia weights $Rf$ and $RH$ (see \cite[Lemma~2.2]{Duo})
in our context. The first ingredient is available here because of the general factorization
theorem proved by Rubio
de Francia \cite[Section~3]{Rubio2}. For the second ingredient we use the maximal Hardy-Littlewood operator given in \eqref{eq:max-calderon}
to construct the weights $Rf$ and $RH$ as in Lemma 2.1 of \cite{Duo}.
Then the proof follows the same lines as in \cite{Duo}.
\end{proof}

\begin{rem} Originally the extrapolation theorem was given for
sublinear operators, but it turns out that sublinearity is not
necessary. Actually even the operator itself does not play any
role and all the statements can be given in terms of
pairs of nonnegative measurable functions. This observation was made by D. Cruz-Uribe and C. P\'erez in
\cite[Remark 1.11]{Cruz-Perez} and then J. Duoandikoetxea adopted this setting in
\cite{Duo}.
\end{rem}

To deduce Proposition \ref{th:GC-RFmeasure} from Theorem
\ref{th:extra-RF} we proceed in the following way. Let $\{T_j\}_{j\ge 0}$ be a sequence of operators such that each one is bounded in $L^s(\R_+,w\,d\mu_{\a})$, for some $1<s<\infty$ and for all $w\in A_s^{\a}$, uniformly in $j\ge0$. Then for $1<r<\infty$ and for any sequence of functions $f_{j}$ in $L^r(\R_+,w\,d\mu_{\a})$, we have
\begin{equation}
\label{eq:inequality con sumas}
\int_0^{\infty}\sum_{j=0}^{\infty}|T_jf_{j}(x)|^rw(x)
\,d\mu_{\a}(x)\le C \int_0^{\infty}\sum_{j=0}^{\infty}|f_{j}(x)|^rw(x)\,d\mu_{\a}(x),
\end{equation}
for all $w\in A_r^{\a}$. Now, in the extrapolation theorem above we make the following choices:
$$
g=\left(\sum_{j=0}^{\infty}|T_jf_{j}|^r\right)^{1/r},\qquad
 f=\left(\sum_{j=0}^{\infty}|f_{j}|^r\right)^{1/r}.
$$
With this pair $(f,g)$, the inequality \eqref{eq:inequality con sumas} is just the hypothesis of Theorem
\ref{th:extra-RF}. Therefore, for any $1<p<\infty$ and all $w\in A_p^{\a}$,
$$
\int_{0}^{\infty}\left(\sum_{j=0}^{\infty}|T_jf_{j}|^r\right)^{p/r}
w(x)\,d\mu_{\a}(x)\le
C\int_0^{\infty}\left(\sum_{j=0}^{\infty}|f_{j}|^r\right)^{p/r}w(x)\,d\mu_{\a}(x),
$$
and the proof is completed.

\section{Vector-valued inequalities for the fractional integrals for the Laguerre expansions of convolution type}
\label{sec:angular}
This section is devoted to the analysis of a vector-valued inequality for the fractional integrals of the Laguerre expansions of convolution type. It arises associated to the angular part, $\frac{1}{r}\nabla_0$, of the operator $\delta$ defined in \eqref{eq:delta-osci} and in the definition of the Riesz transform for the harmonic oscillator.

Let $(L_\alpha)^{-1/2}$ be the fractional integral of order $1/2$ for the Laguerre expansions as given in \eqref{eq:frac-Laguerre}.
Then, we define the operator
\[
\mathcal{T}^{\alpha}f(x)=\frac{1}{x}(L_{\alpha})^{-1/2}f(x), \quad
x\in \R_+.
\]
The boundedness properties of this operator are contained in the
following theorem.
\begin{thm}
\label{th:reminder} Let $\a\ge -1/2$, $a\ge 1$, and $1<p,r<\infty$. Define $u_j(x)=x^{aj}$, $j=1,2,\dots$. Then there
exists a constant $C$ such that
\begin{equation*}
\Big\|\Big(\sum_{j=1}^\infty|ju_j\mathcal{T}^{\a+aj}(u_j^{-1}f_j)|^r\Big)^{1/r}
\Big\|_{L^p(\R_+, w\,d\mu_{\a})} \le
C\Big\|\Big(\sum_{j=0}^\infty|f_j|^r\Big)^{1/r}\Big\|_{L^p(\R_+,
w\, d\mu_{\a})}
\end{equation*}
for all $w\in A_{p}^\a$. Moreover the constant $C$ depends only on $\alpha$ and $w$.
\end{thm}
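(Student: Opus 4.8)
The plan is to reduce Theorem \ref{th:reminder} to a uniform (in $j$) weighted estimate for the individual operators $ju_j\mathcal{T}^{\a+aj}(u_j^{-1}\cdot)$ and then invoke the vector-valued machinery already established, namely Proposition \ref{th:GC-RFmeasure} (or directly Theorem \ref{th:extra-RF}). So the heart of the matter is to prove that for $\a\ge -1/2$, $a\ge 1$ and $1<r<\infty$,
\[
\int_0^\infty |j u_j\mathcal{T}^{\a+aj}(u_j^{-1}f)(x)|^r w(x)\,d\mu_{\a}(x)\le C\int_0^\infty |f(x)|^r w(x)\,d\mu_{\a}(x),
\]
with $C$ independent of $j$ and $w\in A_r^\a$. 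Once this is in hand, Proposition \ref{th:GC-RFmeasure} applied with $T_j=ju_j\mathcal{T}^{\a+aj}(u_j^{-1}\cdot)$ yields the vector-valued inequality for all $1<p,r<\infty$ and $w\in A_p^\a$, finishing the proof exactly as in the deduction of Theorem \ref{th:LpLq Lag convolution dim1}.

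To prove the uniform scalar estimate I would realize $ju_j\mathcal{T}^{\a+aj}(u_j^{-1}f)$ as an integral operator in the Calder\'on--Zygmund sense on the homogeneous space $(\R_+,d\mu_\a)$, exactly mirroring Subsection \ref{subsec:weighted inequality}. First, by the same computation as in Proposition \ref{prop:CZ sense} (conjugation moves $\a$ to $\a+aj$ and turns the $u_j$'s into the factor $(xy)^{aj}$), one has
\[
ju_j\mathcal{T}^{\a+aj}(u_j^{-1}f)(x)=j\int_0^\infty (xy)^{aj}\,T^{\a+aj}(x,y)\,f(y)\,d\mu_\a(y),
\]
where $T^{\a+aj}(x,y)$ is the kernel of $(L_{\a+aj})^{-1/2}$ divided by $x$, i.e. $\tfrac1x$ times the kernel $\frac{1}{\Gamma(1/2)}\int_0^\infty G_{\a+aj,t}(x,y)\,t^{-1/2}\,dt$. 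Using Schl\"afli's representation, Meda's change of variables and the function $\b_{\a}$ from \eqref{function-beta} as in Section \ref{sec:Riesz-Laguerre}, this becomes
\[
j(xy)^{aj}T^{\a+aj}(x,y)=\frac{j(xy)^{aj}}{x}\int_{-1}^1\int_0^1 \b_{\a+aj}(\xi)\exp\Big(-\frac{q_+}{4\xi}-\frac{\xi q_-}{4}\Big)\,d\xi\,\Pi_{\a+aj}(ds).
\]
Then I would establish the two Calder\'on--Zygmund estimates analogous to Proposition \ref{prop:CZestimates}: the growth bound $|j(xy)^{aj}T^{\a+aj}(x,y)|\le C_1/\mu_\a(B(x,|x-y|))$ and the gradient bound $|\nabla_{x,y}[j(xy)^{aj}T^{\a+aj}(x,y)]|\le C_2/(|x-y|\mu_\a(B(x,|x-y|)))$, with constants independent of $j$. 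The growth estimate is actually \emph{easier} than the Riesz case: the integrand is already positive, there is no first-order factor from $\delta_\a$, and the gain is exactly the expression $J_1$ (without the outer $x$) appearing in the proof of \eqref{eq:growth}. Indeed $j(xy)^{aj}T^{\a+aj}(x,y)\le \frac{j}{x}\int_{-1}^1\int_0^1\xi^{-1/2}\b_{\a+aj}(\xi)\exp(-q_+/(4\xi))\,d\xi\,\Pi_{\a+aj}(ds)$ after using \eqref{eq:des2} to absorb the $q_-$-factor, and then Lemma \ref{lem:breve} with $k=1/2$ gives $\le \frac{C_\a j}{x(x+y)^{2\a+1}(xy)^{aj}|x-y|}$; combining with Lemma \ref{lem:measure} reduces matters to $\tfrac{j}{x(xy)^{aj}|x-y|}\cdot|x-y|(x+y)^{2\a+1}\lesssim \tfrac{1}{(xy)^{aj}\mu_\a(B(x,|x-y|))}$ — wait, one must recover the factor $j$: here is where the factor $j$ is harmless, because $\mathcal T$ has one more order of smoothing than $\mathcal R$, producing an extra $q_+^{-1/2}$ which, via Lemma \ref{lem:A}/\ref{lem:0} with $d=aj$ (say), generates $\Gamma(aj)/\Gamma(aj+\tfrac12)\simeq (aj)^{-1/2}$, and more carefully one extra factor $\simeq 1/(aj)$ that beats the $j$ in front. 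This bookkeeping — tracking where the $j$ out front is compensated by Gamma-quotients of the form $\Gamma(aj+\cdot)/\Gamma(aj+\cdot)$ coming from Lemma \ref{lem:0} — is the one point that needs genuine care; it is completely parallel to how the $aj$ factors were controlled in the proof of \eqref{eq:smooth}, where terms like $\tfrac{aj}{x}$ appeared and were absorbed.

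For the smoothness estimate I would differentiate as in \eqref{eq:gradiente}: $\tfrac{d}{dx}[j(xy)^{aj}T^{\a+aj}]=(aj)j x^{aj-1}y^{aj}T^{\a+aj}+j(xy)^{aj}\tfrac{d}{dx}T^{\a+aj}$. The second term is handled by differentiating $\frac1x\int\int \b_{\a+aj}\exp(\cdots)$, which produces (i) a $-\tfrac{1}{x}$ times the kernel itself, already estimated, and (ii) the derivative of the exponential, bringing down a factor $-\big(\tfrac{x+ys}{2\xi}+\tfrac{(x-ys)\xi}{2}\big)$, i.e. exactly the structure of $S_1$ and of the pieces $z_1,\dots,z_5$ in the proof of \eqref{eq:smooth} but \emph{without} the extra $\big(x-\tfrac{x+ys}{2\xi}-\tfrac{(x-ys)\xi}{2}\big)$ prefactor — so this is strictly simpler than the $S_2$ analysis there, reducing to Lemma \ref{lem:breve} with $k=1$ after splitting $x+ys=|x-y|+y(1+s)$ and $x-ys$ handled via \eqref{eq:des2}, plus the $y(1+s)$ term treated exactly as $J_{22}$. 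The first term $(aj)j x^{aj-1}y^{aj}T^{\a+aj}$ is the analogue of the $\tfrac{aj}{x}R^{\a+aj}$ term in \eqref{eq:smooth}; with the extra smoothing from $\mathcal T$ it is dominated by $\tfrac{aj\cdot j}{x}\cdot\tfrac{C_\a}{x(x+y)^{2\a+1}(xy)^{aj}|x-y|}$-type bounds whose Gamma-quotients from Lemma \ref{lem:0} again supply the compensating $1/(aj)^{?}$ factors. The main obstacle, then, is purely the uniformity in $j$: every application of Lemma \ref{lem:A} and Lemma \ref{lem:0} must be set up with the right choice of $d$ (typically $d=aj$, $aj\pm\tfrac12$, $aj\pm 1$) so that the product of all the resulting $\Gamma$-ratios, times the explicit powers of $4$ and of $(x+y)$, $(xy)$, exactly cancels the polynomial-in-$j$ prefactors (the $j$ and the $aj$), leaving a bound of the form $C_\a/(\text{power of }(xy)^{aj}\cdot\mu_\a(B(x,|x-y|)))$ with $C_\a$ independent of $j$ — precisely as was done, term by term, in Section \ref{sec:Riesz-Laguerre}. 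With Proposition \ref{prop:CZestimates}'s analogue established, the weighted $L^r$ boundedness uniform in $j$ follows from the Calder\'on--Zygmund theory on $(\R_+,d\mu_\a)$ (using boundedness of $M_\a$ on $L^p(\R_+,w\,d\mu_\a)$ for $w\in A_p^\a$) together with the $L^2$-boundedness of $ju_j\mathcal T^{\a+aj}(u_j^{-1}\cdot)$ — the latter being clear since $\mathcal T^{\a+aj}=\tfrac1x(L_{\a+aj})^{-1/2}$ maps $\ell_k^{\a+aj}$ to $(4k+2(\a+aj)+2)^{-1/2}\tfrac1x\ell_k^{\a+aj}$ and $\tfrac1x\ell_k^{\a+aj}$ lies in an orthonormal-type system in $L^2(\R_+,d\mu_{\a+aj})$, with $L^2$-norm $\lesssim 1/\sqrt{j}$, so that $\|ju_j\mathcal T^{\a+aj}(u_j^{-1}\cdot)\|_{L^2\to L^2}\lesssim j/\sqrt{j}$ — hmm, this last point needs a sharper spectral bound, namely that $\tfrac1x(L_{\a+aj})^{-1/2}$ is bounded on $L^2(\R_+,d\mu_{\a+aj})$ with norm $\lesssim 1/j$, which follows from $\|\tfrac1x\ell_k^{\a+aj}\|_{d\mu_{\a+aj}}^2\simeq \tfrac1{(\a+aj)}$ uniformly in $k$ (a standard Laguerre estimate); I would state this as a lemma and prove it via the explicit $L^2$-norm of $x^{-1}\ell_k^{\a+aj}$. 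Finally, assembling the uniform scalar bound with Proposition \ref{th:GC-RFmeasure} completes the proof of Theorem \ref{th:reminder}.
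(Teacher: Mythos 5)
Your overall strategy is the paper's: realize $ju_j\mathcal{T}^{\a+aj}(u_j^{-1}\cdot)$ as a Calder\'on--Zygmund operator on $(\R_+,d\mu_\a)$ with uniform-in-$j$ kernel estimates, obtain the scalar weighted bound, and feed it into Proposition \ref{th:GC-RFmeasure}. The kernel bookkeeping you describe (choosing $d=aj,\ aj\pm\tfrac12,\ aj-1$ in Lemma \ref{lem:0} so that the Gamma-quotients absorb the prefactor $j/x$) is exactly the mechanism the paper uses, and since the paper itself omits the details of Proposition \ref{prop:esti-frac}, your sketch is at a comparable level there.

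There is, however, a genuine gap in your $L^2$ ingredient, which the Calder\'on--Zygmund machinery cannot do without. You claim that $\|\tfrac1x(L_{\a+aj})^{-1/2}\|_{L^2(d\mu_{\a+aj})\to L^2(d\mu_{\a+aj})}\lesssim 1/j$ ``follows from'' $\|\tfrac1x\ell_k^{\a+aj}\|^2_{d\mu_{\a+aj}}\simeq(\a+aj)^{-1}$ uniformly in $k$. It does not: the family $\{\tfrac1x\ell_k^{\b}\}_k$ is \emph{not} orthogonal in $L^2(d\mu_{\b})$ (its Gram matrix is $\langle\ell_k^{\b},\ell_m^{\b}\rangle_{d\mu_{\b-1}}$, which has nonzero off-diagonal entries), so bounds on the individual norms control only the diagonal of the quadratic form; and the crude triangle-inequality route fails because $\sum_k(4k+2\b+2)^{-1}$ diverges. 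The paper closes this by the contiguous relations \cite[22.7.29--30]{Abra-Stegun}, which give
\begin{equation*}
\frac{\b}{x}\,\ell_k^{\b}(x)=\frac{\sqrt{k+1}}{x}\,\ell_{k+1}^{\b-1}(x)+x\sqrt{k+\b+1}\,\ell_k^{\b+1}(x),
\end{equation*}
where \emph{each} of $\{\tfrac1x\ell_{k+1}^{\b-1}\}_k$ and $\{x\,\ell_k^{\b+1}\}_k$ genuinely is orthonormal in $L^2(\R_+,d\mu_{\b})$; since the coefficients $\sqrt{(k+1)/(4k+2\b+2)}$ and $\sqrt{(k+\b+1)/(4k+2\b+2)}$ are bounded uniformly in $k$ and $\b$, Parseval applied to each piece yields $\|\b\,\mathcal{T}^{\b}g\|_{d\mu_{\b}}\le C\|g\|_{d\mu_{\b}}$ with $C$ absolute, which is the uniform $L^2$ bound you need. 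Without this (or an equivalent handling of the non-orthogonality), your proof of the key scalar estimate is incomplete.
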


We will deduce the result by using Proposition \ref{th:GC-RFmeasure} and the following proposition.
\begin{prop}
Let $\a\ge -1/2$, $a\ge 1$, and $1<p<\infty$. Define $u_j(x)=x^{aj}$,
$j=1,2,\dots$. Then
\begin{equation*}
\int_0^{\infty}|ju_j\mathcal{T}^{\a+aj}(u_j^{-1}f)(x)|^p w(x)\,d\mu_{\a}(x) \le
C\int_{0}^\infty |f(x)|^p w(x)\,d\mu_{\a}(x),
\end{equation*}
for every weight $w\in A_p^\alpha$, where C is a constant independent of $j$ and depending on $\a$ and $w$.
\end{prop}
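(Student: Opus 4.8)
The plan is to argue exactly as in the proof of Proposition~\ref{th:weighted inequality}: I would show that $ju_j\mathcal{T}^{\a+aj}(u_j^{-1}\cdot)$ is, uniformly in $j$, a Calder\'on--Zygmund operator on the space of homogeneous type $(\R_+,d\mu_\a,|\cdot|)$, and then invoke the weighted Calder\'on--Zygmund theory on such spaces, whose only input is the boundedness of the maximal operator \eqref{eq:max-calderon} on $L^p(\R_+,w\,d\mu_\a)$ for $w\in A_p^\a$, already recorded in Subsection~\ref{subsec:weighted inequality}. Concretely this amounts to three steps: (i) realize the operator as an integral operator with an explicit kernel, (ii) prove its $L^2(\R_+,d\mu_\a)$-boundedness with a bound independent of $j$, and (iii) establish the standard growth and smoothness estimates for the kernel, again with constants independent of $j$.

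For (i) I would note that $(L_\a)^{-1/2}$ from \eqref{eq:frac-Laguerre} is the integral operator with kernel $\Phi_\a(x,y)=\pi^{-1/2}\int_0^\infty t^{-1/2}G_{\a,t}(x,y)\,dt$, and that Schl\"afli's representation together with Meda's change of variable --- the same steps that led to \eqref{eq:kernel-beta} --- give $\Phi_\a(x,y)=\int_0^1\b_\a(\xi)\int_{-1}^1\exp(-q_+/(4\xi)-\xi q_-/4)\,\Pi_\a(ds)\,d\xi$ with the \emph{same} $\b_\a$ of \eqref{function-beta}; this is exactly the quantity $J_1/x$ appearing in the proof of \eqref{eq:growth}, with $\a$ replaced by $\a+aj$. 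Since conjugation by $u_j=x^{aj}$ turns $d\mu_\a$ into $d\mu_{\a+aj}$, arguing verbatim as in Proposition~\ref{prop:CZ sense} one gets, for $f,g\in C_c^\infty(\R_+)$ with disjoint supports, $\langle ju_j\mathcal{T}^{\a+aj}(u_j^{-1}f),g\rangle_{d\mu_\a}=\int_0^\infty\int_0^\infty K_j(x,y)f(y)\overline{g(x)}\,d\mu_\a(y)\,d\mu_\a(x)$ with kernel $K_j(x,y)=\frac{j(xy)^{aj}}{x}\Phi_{\a+aj}(x,y)$.

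Step (ii) is where the restriction $j\ge1$ is used. The same conjugation identity shows that the $L^2(\R_+,d\mu_\a)$-norm of $ju_j\mathcal{T}^{\a+aj}(u_j^{-1}\cdot)$ equals $j$ times the $L^2(\R_+,d\mu_{\a+aj})$-norm of $\frac1x(L_{\a+aj})^{-1/2}$. For $\b>0$ a weighted Hardy inequality --- valid precisely because $\a+aj\ge\a+1\ge1/2>0$ when $j\ge1$ --- gives $\|\frac1x h\|_{L^2(d\mu_\b)}\le\frac1\b\|h'\|_{L^2(d\mu_\b)}$ for $h$ vanishing at $\infty$; applying it to $h=(L_\b)^{-1/2}f$ and writing $h'=\frac{d}{dx}(L_\b)^{-1/2}f=\mathcal{R}^\b f-x(L_\b)^{-1/2}f$ reduces matters to the $L^2(d\mu_\b)$-boundedness, with bound independent of $\b$, of $\mathcal{R}^\b$ and of $x(L_\b)^{-1/2}$. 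The first is immediate from \eqref{eq:defRiesz espectral}; for the second one uses the three-term recurrence for Laguerre functions, which shows that $(L_\b)^{-1/2}x^2(L_\b)^{-1/2}$ acts on the orthonormal basis $\{\ell_k^\b\}_k$ as a tridiagonal matrix with entries bounded by an absolute constant uniformly in $k$ and $\b$, whence $\|x(L_\b)^{-1/2}f\|^2=\langle(L_\b)^{-1/2}x^2(L_\b)^{-1/2}f,f\rangle\le C\|f\|^2$. Thus $\|\frac1x(L_{\a+aj})^{-1/2}\|_{L^2(d\mu_{\a+aj})}\le C/(\a+aj)\le C/(aj)$, so the operator norm is $\le C/a\le C$, independently of $j$.

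The main obstacle is (iii): the bounds $|K_j(x,y)|\le C_1\mu_\a(B(x,|x-y|))^{-1}$ and $|\nabla_{x,y}K_j(x,y)|\le C_2(|x-y|\mu_\a(B(x,|x-y|)))^{-1}$ with $C_1,C_2$ independent of $j$, i.e. the analogues of \eqref{eq:growth} and \eqref{eq:smooth}. I would follow the scheme of Proposition~\ref{prop:CZestimates}: insert the integral representation of $\Phi_{\a+aj}$ into $K_j$, split the $s$-integral into $\int_{-1}^0$ and $\int_0^1$, distinguish the regions $y\le2x$ and $y>2x$, and use the elementary facts \eqref{eq:beta}--\eqref{eq:funcion h} together with Lemmas~\ref{lem:A}, \ref{lem:0}, \ref{lem:breve}; the smoothness estimate is then obtained by differentiating $K_j$ as in \eqref{eq:gradiente} and reducing term by term to integrals of the same shape. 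The delicate point is that $(L_{\a+aj})^{-1/2}$ is smoothing of order one, so $\Phi_{\a+aj}$ is only a borderline object --- its near-diagonal behaviour is logarithmic --- and the naive estimate that simply discards $\exp(-\xi q_-/4)$ runs into the divergent $\Gamma(0)$ of Lemma~\ref{lem:0} with $\lambda=0$. One must instead retain $\exp(-\xi q_-/4)$ and exploit it twice: on one hand to apply Lemma~\ref{lem:0} with a strictly positive $\lambda$, producing the ratio $\Gamma(aj)/\Gamma(aj+\lambda)\simeq(aj)^{-\lambda}$ which --- together with the compensating $\Gamma(\a+aj+\lambda+\tfrac12)/\Gamma(\a+aj+\tfrac12)\simeq(aj)^{\lambda}$ from Lemma~\ref{lem:A} and the cancellation $4^{\a+aj}/(4xy)^{aj}=4^{\a}/(xy)^{aj}$ --- keeps the estimates uniform in $j$ and creates the extra power of $1/|x-y|$ beyond the size of $\Phi_{\a+aj}$; on the other hand, via the elementary inequality $\sqrt{q_+q_-}\ge|x-y|(x+y)$, to convert part of $\exp(-q_+/(4\xi)-\xi q_-/4)$ into $\exp(-|x-y|(x+y)/4)$, which absorbs the leftover logarithm and the remaining power of $1/x$. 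Tracking the factor $\frac{j(xy)^{aj}}{x}$ against these gains in each region then yields bounds of the type \eqref{eq:growth} and \eqref{eq:smooth} for $K_j$ with constants independent of $j$, and the Proposition follows.
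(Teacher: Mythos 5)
Your proposal is correct and follows the same architecture as the paper --- realize $ju_j\mathcal{T}^{\a+aj}(u_j^{-1}\cdot)$ as a Calder\'on--Zygmund operator on $(\R_+,d\mu_\a,|\cdot|)$, prove a $j$-uniform $L^2(d\mu_\a)$ bound, establish $j$-uniform growth and smoothness estimates for the kernel $j(xy)^{aj}T^{\a+aj}(x,y)$ (Proposition \ref{prop:esti-frac}, whose details the paper itself omits), and invoke the weighted theory driven by the maximal operator \eqref{eq:max-calderon} --- but your $L^2$ step takes a genuinely different route. The paper proves \eqref{eq:angularestimate2-L2} by special-function identities: combining \cite[22.7.29 and 22.7.30]{Abra-Stegun} it derives
\[
\frac{\b}{x}\,\ell_k^{\b}(x)=\frac{\sqrt{k+1}}{x}\,\ell_{k+1}^{\b-1}(x)+x\sqrt{k+\b+1}\;\ell_k^{\b+1}(x),
\]
and then exploits that $\{x^{-1}\ell_{k+1}^{\b-1}\}_k$ and $\{x\,\ell_k^{\b+1}\}_k$ are orthonormal in $L^2(d\mu_\b)$ while the coefficients $\sqrt{(k+1)/(4k+2\b+2)}$ and $\sqrt{(k+\b+1)/(4k+2\b+2)}$ are bounded uniformly in $k$ and $\b$. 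You reach the same $O(1/\b)$ bound for $\|\tfrac1x(L_\b)^{-1/2}\|_{L^2(d\mu_\b)}$ via the weighted Hardy inequality $\|\tfrac1x h\|\le\tfrac1\b\|h'\|$ (legitimate since $\b=\a+aj\ge 1/2>0$ for $j\ge1$, which is indeed where that restriction enters), the splitting $h'=\mathcal{R}^\b f-x(L_\b)^{-1/2}f$, and the uniform $L^2$ bounds for these two pieces; the tridiagonal-matrix argument for $x(L_\b)^{-1/2}$ is sound. Your route avoids the Laguerre contiguity relations entirely, at the small cost of justifying the vanishing boundary terms in the integration by parts (harmless by density, since finite linear combinations of $\ell_k^\b$ are bounded near the origin and $\b>0$). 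On the kernel estimates, your diagnosis of the borderline logarithmic behaviour of the kernel of $(L_\b)^{-1/2}$, and the first mechanism you propose to cure it, are exactly right and in fact suffice on their own: shifting half a power (resp.\ one power) of $xy$ from the prefactor $j(xy)^{aj}/x$ into Lemma \ref{lem:0} by taking $d=aj-\tfrac12$ (resp.\ $d=aj-1$) makes $\lambda$ strictly positive, and the resulting $\Gamma(d)/\Gamma(d+\lambda)\simeq(aj)^{-\lambda}$ simultaneously absorbs the factor $j$ and produces the extra power of $|x-y|^{-1}$; this is precisely how the paper treats the analogous terms $\tfrac{aj}{x}J_1,\tfrac{aj}{x}J_2,\tfrac{aj}{x}J_3$ in the smoothness part of Proposition \ref{prop:CZestimates}, so your second device based on $\sqrt{q_+q_-}\ge|x-y|(x+y)$, while correct, is not needed.
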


The previous proposition follows from the weighted
Calder\'on-Zygmund theory. First, we need the $L^2$
estimate for the operator without weights.
\begin{prop}
Let $\a\ge -1/2$ and $a\ge 1$. Define $u_j(x)=x^{aj}$, $j=1,2,\dots$.
Then
\begin{equation}
\label{eq:angularestimate-L2} \int_{0}^\infty
|ju_j\mathcal{T}^{\a+aj}(u_j^{-1}f)(x)|^2\,d\mu_{\a}(x) \le
C\int_{0}^\infty |f(x)|^2\,d\mu_{\a}(x),
\end{equation}
for every $f\in L^2(\R_+,d\mu_\alpha)$, with $C$ independent of
$j$.
\end{prop}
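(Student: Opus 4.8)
The idea is to reduce the $L^2$-boundedness of $j\,u_j\mathcal{T}^{\a+aj}(u_j^{-1}\cdot)$ on $L^2(\R_+,d\mu_\a)$, uniformly in $j$, to the analysis of the Laguerre coefficients, exactly as was done for the Riesz transform after \eqref{eq:defRiesz espectral}. First I would observe, as in the proof of Proposition~\ref{prop:CZ sense}, that multiplication by $u_j=x^{aj}$ is an isometry from $L^2(\R_+,d\mu_{\a+aj})$ onto $L^2(\R_+,d\mu_\a)$; consequently
\[
\big\|j\,u_j\mathcal{T}^{\a+aj}(u_j^{-1}f)\big\|_{L^2(\R_+,d\mu_\a)}
= j\,\big\|\mathcal{T}^{\a+aj}h\big\|_{L^2(\R_+,d\mu_{\a+aj})},
\qquad h=u_j^{-1}f,
\]
and $\|h\|_{L^2(\R_+,d\mu_{\a+aj})}=\|f\|_{L^2(\R_+,d\mu_\a)}$. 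So it suffices to show that $j\,\mathcal{T}^{\a+aj}=\tfrac{j}{x}(L_{\a+aj})^{-1/2}$ is bounded on $L^2(\R_+,d\mu_{\a+aj})$ with a bound independent of $j$.

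Next I would compute the action of $\mathcal{T}^{\beta}$ on the orthonormal basis $\{\ell_k^{\beta}\}_{k\ge0}$. Writing $\beta=\a+aj$, from \eqref{eq:frac-Laguerre} we have $(L_\beta)^{-1/2}\ell_k^\beta=(4k+2\beta+2)^{-1/2}\ell_k^\beta$, so
\[
j\,\mathcal{T}^{\beta}\ell_k^\beta=\frac{j}{(4k+2\beta+2)^{1/2}}\,\frac{1}{x}\,\ell_k^\beta(x).
\]
The key point is to expand $x^{-1}\ell_k^\beta$ back in the basis $\{\ell_m^\beta\}_m$ (or in a companion basis) using a known contiguous relation for Laguerre functions — this is where the structure of $L_\beta=\delta_\beta^*\delta_\beta+2(\beta+1)$ and the identity $\delta_\beta\ell_k^\beta=-2\sqrt{k}\,x\,\ell_{k-1}^{\beta+1}$ already recorded in the paper come in. Since $\{x\,\ell_{k-1}^{\beta+1}\}_{k\ge1}$ is orthonormal in $L^2(\R_+,d\mu_\beta)$ (cited from \cite[Proposition 4.1]{Now-Ste-Ad}), one gets a clean formula: $\tfrac{1}{x}\ell_k^\beta$ is, up to the factor $-\tfrac{1}{2\sqrt k}$, the $k$-th element of an orthonormal-type system after applying $\delta_\beta^{-1}$-type considerations. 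More directly, pairing $j\,\mathcal{T}^\beta f=\tfrac{j}{x}(L_\beta)^{-1/2}f$ against a test function and using $(L_\beta)^{-1/2}=\delta_\beta^{-1}\!\cdot$ on the range of $\delta_\beta$ is delicate because of the zero mode; instead I would simply estimate the multiplier directly: expanding $f=\sum_k a_k\ell_k^\beta$, Parseval in $d\mu_\beta$ gives
\[
\big\|j\,\mathcal{T}^\beta f\big\|^2_{L^2(d\mu_\beta)}
= j^2\Big\|\sum_k \frac{a_k}{(4k+2\beta+2)^{1/2}}\,\frac{\ell_k^\beta}{x}\Big\|^2_{L^2(d\mu_\beta)},
\]
and then use the contiguity formula expressing $\ell_k^\beta/x$ as a finite linear combination of $\ell_{k}^{\beta-1},\ell_{k+1}^{\beta-1},\dots$ with coefficients of size $O(k^{1/2}/\beta^{1/2})$ to bound the $L^2(d\mu_\beta)$-norm.

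**Main obstacle.** The crux is the uniform control in $j$ (equivalently in $\beta=\a+aj\to\infty$) of the operator norm: the factor $j$ in front must be absorbed by the smallness of $(4k+2\beta+2)^{-1/2}$ together with the $\beta$-dependence of the coefficients in the expansion of $x^{-1}\ell_k^\beta$. Heuristically, $\tfrac{1}{x}$ behaves like $(L_\beta)^{1/2}$ divided by $\delta_\beta$, and on $\ell_k^\beta$ the relevant eigenvalue scale is $4k+2\beta+2\gtrsim \beta\gtrsim j$, so $\tfrac{j}{(4k+2\beta+2)^{1/2}}\cdot\|x^{-1}\ell_k^\beta\|_{d\mu_\beta}\lesssim \tfrac{j}{\sqrt\beta}\cdot\sqrt{\tfrac{k+\beta}{?}}$ — getting this to stay bounded requires that the ``$\tfrac1x$'' operator lose essentially half a derivative measured against $L_\beta$ uniformly in $\beta$, which is the content of the weighted/unweighted $L^2$ theory for $(L_\beta)^{-1/2}\tfrac1x$ and should follow from the explicit three-term recurrence. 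Once the diagonal multiplier bound is in place, \eqref{eq:angularestimate-L2} is immediate by the isometry in the first paragraph; the orthogonality built into the expansion is what makes the $L^2$ estimate the easy half of the Calderón–Zygmund programme (the kernel estimates being the genuinely hard part, to be handled separately as in Section~\ref{sec:Riesz-Laguerre}).
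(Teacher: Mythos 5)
Your reduction in the first paragraph is exactly the one the paper uses: multiplication by $u_j$ is an isometry between $L^2(\R_+,d\mu_{\a+aj})$ and $L^2(\R_+,d\mu_\a)$, so the claim is equivalent to the uniform boundedness of $j\,\mathcal{T}^{\b}=\tfrac{j}{x}(L_\b)^{-1/2}$ on $L^2(\R_+,d\mu_\b)$ with $\b=\a+aj$ (and since $a\ge1$ one has $j\le 2\b$, so it suffices to bound $\b\,\mathcal{T}^{\b}$). The general strategy --- expand in $\{\ell_k^\b\}$ and handle $x^{-1}\ell_k^\b$ via a contiguous relation plus Parseval --- is also the paper's. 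But the step that actually carries the proof is missing, and the substitute you propose does not exist in the form claimed: $x^{-1}\ell_k^\b$ is \emph{not} a finite linear combination of $\ell_k^{\b-1},\ell_{k+1}^{\b-1},\dots$, since such a combination is $e^{-x^2/2}$ times a polynomial in $x^2$ while $x^{-1}L_k^\b(x^2)e^{-x^2/2}$ is not (as $L_k^\b(0)\neq 0$). Moreover, a crude bound on $\|x^{-1}\ell_k^\b\|_{L^2(d\mu_\b)}$ followed by the triangle inequality cannot work (the resulting sum $\sum_k(4k+2\b+2)^{-1}$ diverges), so orthogonality of whatever system replaces $\{\ell_m^{\b-1}\}$ is essential. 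Your ``Main obstacle'' paragraph then replaces the decisive computation by a heuristic (``should follow from the explicit three-term recurrence''), which is precisely the point at issue; as written the argument is not closed.

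The identity that does the job, obtained from \cite[22.7.29 and 22.7.30]{Abra-Stegun} and the normalization in the definition of $\ell_k^\a$, is
\begin{equation*}
\frac{\b}{x}\,\ell_{k}^{\b}(x)=\frac{\sqrt{k+1}}{x}\,\ell_{k+1}^{\b-1}(x)+ x\sqrt{k+\b+1}\;\ell_{k}^{\b+1}(x).
\end{equation*}
The two families $\bigl\{\tfrac1x\ell_{k+1}^{\b-1}\bigr\}_{k\ge0}$ and $\bigl\{x\,\ell_{k}^{\b+1}\bigr\}_{k\ge0}$ are each orthonormal in $L^2(\R_+,d\mu_\b)$, because the factors $x^{\mp1}$ are absorbed into the measure by the shift of the order (cf.\ \cite[Proposition 4.1]{Now-Ste-Ad}). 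Inserting the identity into $\b\,\mathcal{T}^\b g=\sum_k(4k+2\b+2)^{-1/2}\langle g,\ell_k^\b\rangle_{d\mu_\b}\,\tfrac{\b}{x}\ell_k^\b$ and applying Parseval to each of the two resulting series reduces everything to the elementary bounds $\tfrac{k+1}{4k+2\b+2}\le C$ and $\tfrac{k+\b+1}{4k+2\b+2}\le C$, uniformly in $k\ge0$ and $\b\ge\a+a\ge1/2$. This is the uniform-in-$j$ control your heuristic gestures at; without this identity (or an equivalent one) the proposal has a genuine gap.
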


\begin{proof}
Take $g=u_j^{-1}f$. Then, inequality \eqref{eq:angularestimate-L2} is implied by
\begin{equation}
\label{eq:angularestimate2-L2} \int_{0}^\infty
|(\a+aj)\mathcal{T}^{\a+aj}g(x)|^2\,d\mu_{\a+aj}(x) \le
C\int_{0}^\infty |g(x)|^2\,d\mu_{\a+aj}(x).
\end{equation}

The identity for the Laguerre polynomials \cite[p. 783,
 22.7.29]{Abra-Stegun}
\begin{equation*}
L_{k+1}^{\b+1}(x)=\frac{1}{x}((x-k-1)L_{k+1}^\b(x)+(\b+k+1)L_{k}^\b(x))
\end{equation*}
can be rewritten as
\[
\frac{\b}{x}L_k^{\b}(x)=L_{k+1}^{\b+1}(x)-L_{k+1}^{\b}(x)+\frac{k+1}{x}(L_{k+1}^{\b}(x)-L_{k}^{\b}(x)).
\]
Now we use the following \cite[p. 783,
 22.7.30]{Abra-Stegun}
\begin{equation*}
L_{k}^{\b-1}(x)=L_k^\b(x)-L_{k-1}^\b(x),
\end{equation*}
applied to the differences $L_{k+1}^{\b+1}(x)-L_{k+1}^{\b}(x)$ and $L_{k+1}^{\b}(x)-L_{k}^{\b}(x)$ and we change $x\mapsto x^2$
to deduce that
\begin{equation}
\label{ec:lag3}
\frac{\b}{x}L_k^{\b}(x^2)=\frac{k+1}{x}L_{k+1}^{\b-1}(x^2)+xL_k^{\beta+1}(x^2).
\end{equation}

Now from \eqref{ec:lag3} and the definition of the functions $\ell_k^\a$ given in \eqref{ec:laguerre}, we conclude
\[
\frac{\b}{x}
\ell_{k}^{\b}(x)=\frac{\sqrt{k+1}}{x}\ell_{k+1}^{\b-1}(x)+
x\sqrt{k+\b+1}\,\ell_{k}^{\b+1}(x).
\]
In this way, the left side of \eqref{eq:angularestimate2-L2} is
bounded by the sum of
\[
\int_0^\infty \Big|\sum_{k=0}^\infty
\sqrt{\frac{k+1}{4k+2\a+2aj+2}}\frac{\ell_{k+1}^{\a+aj-1}(x)}{x}\langle
g,\ell_{k}^{\a+aj}\rangle_{d\mu_{\a+aj}}\Big|^2\, d\mu_{\a+aj}(x),
\]
and
\[
\int_0^\infty \Big|\sum_{k=0}^\infty
\sqrt{\frac{k+\a+aj+1}{4k+2\a+2aj+2}}x\ell_{k}^{\a+aj+1}(x)\langle
g,\ell_{k}^{\a+aj}\rangle_{d\mu_{\a+aj}}\Big|^2\, d\mu_{\a+aj}(x).
\]
But the two previous summands can be controlled by the right side
of \eqref{eq:angularestimate2-L2} with a constant $C$ independent
of $j$.
\end{proof}

The operator $\mathcal{T}^{\a}f$ can be written as
\[
\mathcal{T}^{\a}f(x)=\int_0^\infty T^\a(x,y) f(y)\, d\mu_\a(y)
\]
where
\[
T^\a(x,y)=\frac{1}{x}\frac{1}{\sqrt{\pi}}\int_0^\infty
G_{\a,t}(x,y)t^{-1/2}\, dt.
\]
So, proceeding as in Proposition \ref{prop:CZ sense}, we can see
that the operators $ju_j \mathcal{T}^{\a+aj}(u_j^{-1}f)$ can be
associated, in the Calder\'on-Zygmund sense, with the kernel
\[
j(xy)^{aj}T^{\a+aj}(x,y)=\frac{j}{x}\frac{(xy)^{aj}}{\sqrt{\pi}}\int_0^\infty
G_{\a,t}(x,y)t^{-1/2}\, dt.
\]

For this kernel, the following estimates of Calder\'on-Zygmund type are
verified.

\begin{prop}
\label{prop:esti-frac}
Let $\a\ge-1/2$, $a\ge 1$, and $j\ge 1$. Then
\begin{align*}
|j(xy)^{aj}T^{\a+aj}(x,y)|&\le \frac{C_1}{\mu_\a(B(x,|x-y|))},
\quad x\not= y,\\
|j\nabla_{x,y}(xy)^{aj}T^{\a+aj}(x,y)|&\le
\frac{C_2}{|x-y|\mu_\a(B(x,|x-y|))}, \quad x\not= y,
\end{align*}
with $C_1$ and $C_2$ independent of $j$, and where
$\mu_{\a}(B(x,|x-y|))=\int_{B(x,|x-y|)}\,d\mu_\a$ and $B(x,|x-y|)$
is the ball of center $x$ and radius $|x-y|$.
\end{prop}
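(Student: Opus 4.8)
The plan is to obtain the kernel $j(xy)^{aj}T^{\a+aj}(x,y)$ in an integral form entirely analogous to \eqref{eq:kernel-beta} and then run the same machinery of Lemmas \ref{lem:A}, \ref{lem:0} and \ref{lem:breve} that worked for Proposition \ref{prop:CZestimates}. First I would use the Schl\"afli--Poisson representation of $I_{\a+aj}$ and Meda's change of variable, exactly as before, to write
\[
j(xy)^{aj}T^{\a+aj}(x,y)=\frac{j(xy)^{aj}}{x}\int_{-1}^1\int_0^1\b_{\a+aj}(\xi)
\exp\Big(-\frac{q_+}{4\xi}-\frac{\xi q_-}{4}\Big)\,d\xi\,\Pi_{\a+aj}(ds),
\]
so that, up to the extra factor $j/x$, the object to estimate is precisely $(xy)^{aj}J_1$ from the growth-estimate section. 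In fact this expression is nothing but $\tfrac{j}{x}(xy)^{aj}J_1$ with the $J_1$ of the proof of \eqref{eq:growth}, so the first bound follows immediately: we already showed $(xy)^{aj}|J_1|\le C_\a/\mu_\a(B(x,|x-y|))$ uniformly in $j$, hence it suffices to absorb the harmless gain $j/x$, which is exactly the kind of estimate carried out in the smoothness proof when bounding $\tfrac{aj}{x}J_1$ (note $aj\simeq j$ since $a\ge1$ is fixed). So the growth estimate reduces, after the reduction $x|I_{11}|+x|I_{12}|$ as in that subsubsection, to Lemma \ref{lem:breve} with $k=1/2$ and to the $\tfrac{aj}{x}|J_2|$-type estimate, and there is nothing new.

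For the gradient bound I would split as in \eqref{eq:gradiente}:
\[
\nabla_{x,y}\big[j(xy)^{aj}T^{\a+aj}(x,y)\big]
=\frac{aj\,(j x^{aj-1}y^{aj})}{x}J_1+j(xy)^{aj}\,\partial_x\!\Big(\frac1x J_1\Big)
\]
plus the symmetric $\partial_y$ term. The first summand carries a factor $\tfrac{j^2}{x^2}$ times $(xy)^{aj}J_1$ and is handled exactly by the $\tfrac{aj}{x}$-reduction of the smoothness section applied twice (each factor $\tfrac{j}{x}$ costs one extra half-power of $\xi^{-1}$ inside the $q_+$-integral, i.e.\ raises the index $k$ in Lemma \ref{lem:breve} by $1/2$, which is still covered by Lemmas \ref{lem:A}--\ref{lem:0} since the $\Gamma$-ratios telescope to a constant uniform in $j$). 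For the second summand, $\partial_x(\tfrac1x J_1)=-\tfrac1{x^2}J_1+\tfrac1x\partial_x J_1$; the $-\tfrac1{x^2}J_1$ piece is again of the above type, while $\tfrac1x\partial_x J_1$ produces precisely the combinations $S_1$ and $S_2$ from the smoothness proof (differentiating the exponent brings down $-\tfrac{x+ys}{2\xi}-\tfrac{(x-ys)\xi}{2}$, and differentiating the prefactor $x$ gives the $S_1$-term), now multiplied by $\tfrac{j}{x}$. Each of $\tfrac{j}{x}S_1$ and $\tfrac{j}{x}S_{2m}$, $m=1,\dots,5$, is bounded just as $S_1$ and $S_{2m}$ were, with one additional $\tfrac{j}{x}$ that is absorbed by raising the relevant $k$ by $1/2$ in Lemma \ref{lem:breve} (and correspondingly shifting the parameters $d,\lambda$ in Lemma \ref{lem:0}), the case splits $y\le 2x$ versus $y>2x$ being identical to those already carried out.

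The main obstacle is purely bookkeeping: one must check that the extra powers of $j$ introduced by the factor $\tfrac jx$ (one in the growth estimate, two in the gradient estimate) are compensated by the decay of the relevant ratios of Gamma functions, so that all constants remain independent of $j$. Concretely, each $\tfrac jx$ forces, after the change of variable $s=1-2u$, a shift of the form $d\mapsto d-1/2$ in Lemma \ref{lem:0}, contributing a factor $\tfrac{\Gamma(aj-1/2)}{\Gamma(aj)}\simeq (aj)^{-1/2}$, which exactly cancels the gain $\Gamma(\a+aj+k+1/2)/\Gamma(\a+aj+1/2)\simeq (aj)^{k}$ arising from Lemma \ref{lem:A} when $k$ is increased by $1/2$; tracking these cancellations through all of $J_1,S_1,S_{21},\dots,S_{25}$ and both the $x$- and $y$-derivatives, and verifying in the low-$d$ cases ($d=0$, or the borderline where one uses $u^{\a+aj-1/2}(1-u)\le C$) that the constants $C(d)$ in Lemma \ref{lem:0} stay bounded, is the only place care is needed. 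Once this is done the Calder\'on--Zygmund theory on $(\R_+,d\mu_\a,|\cdot|)$, together with the unweighted $L^2$ bound \eqref{eq:angularestimate-L2} and the weighted boundedness of $M_\a$, yields the weighted $L^p$ inequality, and Proposition \ref{th:GC-RFmeasure} then upgrades it to the vector-valued bound of Theorem \ref{th:reminder}.
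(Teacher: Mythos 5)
Your overall route is the paper's: the authors themselves only say that the proposition ``follows the lines of Proposition \ref{prop:CZestimates}'', write the kernel via the Schl\"afli representation and Meda's change of variable exactly as in your display, and invoke Lemmas \ref{lem:A}, \ref{lem:0} and \ref{lem:breve}; so in spirit your proposal is a (more detailed) version of the intended argument. There is, however, one concrete slip that you must repair before the reductions you describe are legitimate. Since $J_1=x\int_{-1}^1\int_0^1\b_{\a+aj}(\xi)\exp(\cdot)\,d\xi\,\Pi_{\a+aj}(ds)$ already carries a prefactor $x$, the kernel is
\[
j(xy)^{aj}T^{\a+aj}(x,y)=\frac{j}{x^{2}}\,(xy)^{aj}J_1,
\]
not $\frac{j}{x}(xy)^{aj}J_1$ as you assert. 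This is not cosmetic: the quantity $\frac{j}{x}(xy)^{aj}|J_1|=j(xy)^{aj}\bigl|\int\!\!\int\b\exp\bigr|$ that your identification would have you bound picks up an uncancelled factor of order $\sqrt{xy}$ after the Gamma-ratio bookkeeping, and is \emph{not} dominated by $C/\mu_\a(B(x,|x-y|))$ for $x\simeq y$ large; it is precisely the extra $1/x$ (giving $\sqrt{xy}/x=\sqrt{y/x}\le C$ in the regime $y\le 2x$) that saves the estimate.

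Relatedly, neither of the two bounds you propose to quote applies verbatim: the growth section gives $(xy)^{aj}|J_1|\le C/\mu_\a(B)$ and the smoothness section gives $\frac{aj}{x}(xy)^{aj}|J_1|\le C/(|x-y|\mu_\a(B))$, and the target here, $\frac{j}{x^{2}}(xy)^{aj}|J_1|\le C/\mu_\a(B)$, follows from neither (the second would require $x|x-y|\gtrsim 1$). One really has to rerun Lemma \ref{lem:A}, \eqref{eq:funcion h} and Lemma \ref{lem:0} with freshly chosen parameters --- e.g.\ for the growth bound, $b=1/2$, $c=\a$, $d=aj-1/2$, $\lambda=1/2$ when $y\le 2x$, and $u^{\a+aj-1/2}(1-u)\le C$ with $c=\a$, $d=aj-1$, $\lambda=1$ when $y>2x$ --- so that the factor $j$ is cancelled by $\Gamma(aj-1/2)/\Gamma(aj)\simeq(aj)^{-1/2}$ together with the gain from \eqref{eq:funcion h}, and the factor $1/x$ by the $(4xy)^{1/2}$ released when $d$ is lowered. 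Your final paragraph shows you understand this cancellation mechanism, so the fix is straightforward; but as written the claimed reduction ``there is nothing new'' rests on the mis-identified kernel, and the gradient decomposition inherits the same missing power of $x$ (the correct first term is $\frac{aj}{x}$ times the kernel, i.e.\ $\frac{aj\cdot j}{x^{3}}(xy)^{aj}J_1$, plus a term $-\frac{1}{x}$ times the kernel coming from differentiating the $1/x$, which you omit).
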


The proof of this proposition follows the lines of Proposition \ref{prop:CZestimates}. In this case the kernel is written as
\[
T^{\a}(x,y)=\frac{1}{x}\frac{1}{\sqrt{\pi}}\int_0^1\b_{\a}(\xi)\int_{-1}^1
\exp\Big(-\frac{q_+}{4\xi}-\frac{\xi q_-}{4}\Big)\,\Pi_{\a}(ds)\,d\xi,
\]
where $\b_\a$ is the function in \eqref{function-beta}. Then, to obtain the estimates in Proposition \ref{prop:esti-frac} we have to use Lemma \ref{lem:A}, Lemma \ref{lem:0}, and Lemma \ref{lem:breve} as it was done in the proof of Proposition \ref{prop:CZestimates}. The details are omitted.
\section{Proof of Theorem \ref{th:osci}}
\label{sec:proof}
With the change $j=m-2k$, we have
\begin{align*}
|Rf(x)|^2&=|\delta H^{-1/2}f(x)|^2=
\Big|\sum_{j=0}^\infty
\sum_{\ell=1}^{\dim \mathcal{H}_j}\sum_{k=0}^\infty
\frac{c_{2k+j,k,\ell}(f)}{\sqrt{n+2j+4k}}\delta
\tilde{\phi}_{2k+j,k,\ell}(x)\Big|^2\\&=
\Big|x'\sum_{j=0}^\infty
\sum_{\ell=1}^{\dim \mathcal{H}_j}\sum_{k=0}^\infty
\frac{c_{2k+j,k,\ell}(f)}{\sqrt{n+2j+4k}}\Big(\frac{\partial}{\partial r}+r\Big)
(r^j \ell_k^{n/2-1+j}(r))\mathcal{Y}_{j,\ell}(x')\\ &\kern40pt+\frac{1}{r}\sum_{j=0}^\infty
\sum_{\ell=1}^{\dim \mathcal{H}_j}\sum_{k=0}^\infty
\frac{c_{2k+j,k,\ell}(f)}{\sqrt{n+2j+4k}}
r^j\ell_k^{n/2-1+j}(r)\nabla_0\mathcal{Y}_{j,\ell}(x')\Big|^2\\
\\&=
\Big|x'\sum_{j=0}^\infty
\sum_{\ell=1}^{\dim \mathcal{H}_j}\sum_{k=0}^\infty
\frac{c_{2k+j,k,\ell}(f)}{\sqrt{n+2j+4k}}r^j\Big(\frac{\partial}{\partial r}+r\Big)
\ell_k^{n/2-1+j}(r)\mathcal{Y}_{j,\ell}(x')\\&\kern40pt+
x'\sum_{j=0}^\infty \frac{j}{r}
\sum_{\ell=1}^{\dim \mathcal{H}_j}\sum_{k=0}^\infty
\frac{c_{2k+j,k,\ell}(f)}{\sqrt{n+2j+4k}}
r^j \ell_k^{n/2-1+j}(r)\mathcal{Y}_{j,\ell}(x')
\Big|^2\\ &\kern40pt+\Big|\frac{1}{r}\sum_{j=0}^\infty
\sum_{\ell=1}^{\dim \mathcal{H}_j}\sum_{k=0}^\infty
\frac{c_{2k+j,k,\ell}(f)}{\sqrt{n+2j+4k}}
r^j\ell_k^{n/2-1+j}(r)\nabla_0\mathcal{Y}_{j,\ell}(x')\Big|^2,
\end{align*}
where in the last step we used that $\langle x' , \nabla_0 \mathcal{Y}_{j,\ell}(x')\rangle =0$.
Now, from the identity (see \cite[Lemma 2.2]{Perez-Pinar-Xu})
\[
\int_{\mathbb{S}^{n-1}}\langle \nabla_0 \mathcal{Y}_{j,\ell}(x')
,\nabla_0 \mathcal{Y}_{j',\ell'}(x')\rangle \, d\s
(x')=j(2j+n-2)\delta_{j,j'}\delta_{\ell,\ell'}
\]
and by using that
\[
c_{2k+j,k,\ell}(f)=\int_0^\infty\big(r^{-j} f_{j,\ell}(r)\big) \ell_{k}^{n/2-1+j}(r)r^{n-1+2j}\,dr,
\]
where $f_{j,\ell}$ is given by \eqref{ec:descom},
it can be easily checked that
\begin{align*}
\int_{\mathbb{S}^{n-1}} |Rf(x)|^2\, d\s(x')&\le \sum_{j=0}^\infty \sum_{\ell=1}^{\dim \mathcal{H}_j}
\left|r^{j}\mathcal{R}^{n/2-1+j}((\cdot)^{-j}f_{j,\ell})(r)\right|^2\\
&\,\,\,\,+C\sum_{j=1}^\infty \sum_{\ell=1}^{\dim
\mathcal{H}_j}\frac{2j+n-2}{j} \left|j
r^{j}\mathcal{T}^{n/2-1+j}((\cdot)^{-j}f_{j,\ell})(r)\right|^2.
\end{align*}
Then Theorem \ref{th:osci} is an immediate consequence of Theorem \ref{th:LpLq
Lag convolution dim1} and Theorem \ref{th:reminder} because the double sum $\sum_{j=0}^\infty \sum_{\ell=1}^{\dim \mathcal{H}_j}$ can be rearranged to become a single sum $\sum_{j=0}^\infty$.

\noindent\textbf{Acknowledgement.}  We are very thankful to the two referees of this paper for helpful and valuable remarks. We are also greatly indebted to Gustavo Garrig\'os for some fruitful discussions on the main topic of this paper.



\end{document}